%

\documentclass{amsart}
\usepackage{amssymb,xspace}
\usepackage[all]{xy}
\newtheorem{thm}{Theorem}[section]
\newtheorem{lem}[thm]{Lemma}
\newtheorem{prop}[thm]{Proposition}
\newtheorem{cor}[thm]{Corollary}

\theoremstyle{definition}
\newtheorem{rk}[thm]{Remark}
\newtheorem{defn}[thm]{Definition}

\numberwithin{equation}{section}

\newcommand{\OO}{{\mathcal O}}
\newcommand{\MC}{\lM_\C}
\newcommand{\tildeMC}{\wtl\lM_\C}
\newcommand{\C}{\mathbb{C}}
\renewcommand{\P}{\mathbb{P}}
\newcommand{\MM}{\mathfrak{M}}
\newcommand{\R}{\mathbb{R}}
\newcommand{\LL}{\mathbb{L}}
\newcommand{\Jtilde}{\widetilde{J}} 
\newcommand{\Qtilde}{\widetilde{Q}} 
\newcommand{\Atilde}{\widetilde{A}} 
\newcommand{\That}{{\mathcal{T}}} 


\newif\ifrem\remtrue
\def\mat#1{\ensuremath{#1}\xspace}
\def\DMO{\DeclareMathOperator}
\def\set#1{\mat{\{#1\}}}
\def\sets#1#2{\mat{\{#1\mid#2\}}}
\def\ang#1{\mat{\left\langle #1\right\rangle}}
\def\case#1{\begin{cases}#1\end{cases}}
\def\wtl{\widetilde}
\def\ub{\overline}

\def\cC{\mathbb{C}}
\def\cQ{\mathbb{Q}}
\def\cH{\mathbb{H}}
\def\cL{\mathbb{L}}
\def\cN{\mathbb{N}}
\def\cP{\mathbb{P}}
\def\cR{\mathbb{R}}
\def\cZ{\mathbb{Z}}
\def\lM{\mathcal{M}}
\def\lP{\mathcal P}
\def\al{\mat{\alpha}}
\def\be{\mat{\beta}}
\def\eps{\mat{\varepsilon}}
\def\De{\mat{\Delta}}

\def\La{\mat{\Lambda}}
\def\la{\mat{\lambda}}
\def\hi{\mat{\chi}}
\def\si{\mat{\sigma}}
\def\vi{\mat{\varphi}}
\def\ze{\mat{\zeta}}
\def\gM{\mathfrak{M}}

\DMO\Hom{Hom}
\DMO\End{End}
\DMO\GL{GL}
\DMO\Aut{Aut}
\DMO\Exp{Exp}
\DMO\Log{Log}
\DMO\Pow{Pow}
\DMO\Id{Id}

\def\crit{\operatorname{crit}}
\def\udim{\operatorname{\underline\dim}}
\def\vir{\mathrm{vir}}
\def\im{\mathrm{im}}
\def\re{\mathrm{re}}

\def\pser#1{[\![#1]\!]} 

\def\dd{\mat{\partial}}
\def\ms{\backslash} 
\def\sb{\subset}
\def\xx{\times}
\def\n#1{\mat{\lvert#1\rvert}}
\def\mto{\mapsto}
\def\ts{\otimes}

\def\inv{^{-1}}
\def\dual{^\vee}
\def\oh{\mat{\frac12}}
\def\ie{i.e.\ }

\def\GG{G}

\usepackage{hyperref}
\hyphenation{semi-sta-bi-lity}

\begin{document}

\remfalse
\title[Motivic Donaldson--Thomas invariants of the conifold]{Motivic Donaldson--Thomas invariants of the conifold and the refined topological vertex}

\author[Morrison]{Andrew Morrison}
\email{andrewmo@math.ubc.ca}
\author[Mozgovoy]{Sergey Mozgovoy}
\email{mozgovoy@maths.ox.ac.uk}
\author[Nagao]{Kentaro Nagao}
\email{kentaron@math.nagoya-u.ac.jp}
\author[Szendr\H oi]{Bal\'azs Szendr\H oi} 
\email{szendroi@maths.ox.ac.uk}

\begin{abstract}
We compute the motivic Donaldson--Thomas theory of the resolved conifold, in all chambers 
of the space of stability conditions of the corresponding quiver. The answer is a product formula
whose terms depend on the position of the stability vector, generalizing known 
results for the corresponding numerical invariants. Our formulae imply in particular a motivic form of the 
DT/PT correspondence for the resolved conifold. The answer for the motivic
PT series is in full agreement with the prediction of the refined topological vertex formalism. 
\end{abstract}

\maketitle

\thispagestyle{empty}

\section*{Introduction}

A {\it Donaldson-Thomas} (DT) {\it invariant} of a Calabi-Yau $3$-fold $Y$ is a counting invariant of 
coherent sheaves on $Y$, introduced in \cite{thomas_holomorphic} as a holomorphic analogue of the Casson invariant 
of a real $3$-manifold. A component of the moduli space of (say stable) coherent sheaves on $Y$ carries 
a symmetric obstruction theory and a virtual fundamental cycle \cite{behrend_intrinsic,behrend_symmetric}. 
A DT invariant of a compact $Y$ is then defined as the integral of the constant function $1$ over the virtual 
fundamental cycle of the moduli space.

It is known that the moduli space of coherent sheaves on $Y$ can be locally described as the critical locus of a 
function, the {\it holomorphic Chern--Simons functional} (see \cite{joyce_theory}). Behrend provided 
a description of DT invariants in terms of the Euler characteristic of the {\it Milnor fiber} of the CS 
functional~\cite{behrend_donaldson-thomas}. Inspired by this result, the proposal of \cite{kontsevich_stability,behrend_motivic} was  
to study the {\it motivic Milnor fiber} of the CS functional as a motivic refinement of the DT invariant. Such a refinement 
had been expected in string theory \cite{iqbal_refined, dimofte_refined}.

The purpose of this paper is to show how the ideas of Szendr\H oi~\cite{szendroi_non-commutative} and Nagao and Nakajima~\cite{nagao_counting}
can be used to study the motivic refinement of DT theory  and related enumerative theories 
associated to the local conifold $Y=\OO_{\P^1}(-1,-1)$, the threefold total space over~$\P^1$ of the rank two 
bundle $\OO_{\P^1}(-1)\oplus\OO_{\P^1}(-1)$. In~\cite{szendroi_non-commutative}, it was realized that a counting problem
closely related to the original DT counting on $Y$ can be formulated algebraically, in terms of counting representations
of a certain quiver with potential (see below), the so-called conifold quiver. It was also conjectured there that the algebraic and
geometric counting problems are related by wall crossing. The paper~\cite{nagao_counting} realized this, by
\begin{itemize}
\item describing the natural chamber structure on the space of stability parameters of the conifold quiver,
\item finding chambers which correspond to geometric DT and stable pair (PT), as well as algebraic noncommutative DT invariants, and
\item computing the generating function of Donaldson-Thomas type invariants for each chamber.
\end{itemize}
In this paper, we consider motivic refinements of these formulae. The motivic refinement is given by the motivic class of 
vanishing cycles of the conifold potential. This virtual motive ``motivates'' DT theory and its variants (PT, NCDT) in the sense 
that its Euler characteristic specialization is the corresponding enumerative invariant of the moduli space. 

The main result of this paper is the computation of the generating series of these virtual motives in all chambers of the space of 
stability conditions. We constantly use the torus action on $Y$, together with a result of~\cite{behrend_motivic}. 
We use the factorization property of \cite{kontsevich_stability,kontsevich_cohomological, mozgovoy_motivica, nagao_wall-crossing}. 
We also need one explicit evaluation, Theorem~\ref{eq univ exp form}; 
we give two proofs of that result, one using an explicit calculation of the generating series of 
motives of a certain space of matrices, another relying on a further ``dimensional reduction'' to a problem on a tame quiver.

At large volume, our result agrees (up to a subtlety involving the Hilbert scheme of points) with the refined topological vertex 
formulae of~\cite{iqbal_refined}, also discussed in~\cite{dimofte_refined} in this context. 

The motives considered here exist globally over the moduli spaces. Thus our point of view is slightly different from that 
of~\cite{kontsevich_stability}, whose general framework involves building motivic invariants from local data. The results here are fully compatible with theirs, but the proofs do not depend on the partially conjectural setup of~\cite{kontsevich_stability}, in particular their integration map.

As well as a motivic refinement, there is also a ``categorification'' given by the mixed Hodge module of vanishing cycles of the 
superpotential; compare~\cite{kontsevich_cohomological}. Our results can also be interpreted as computing the generating series 
of E-polynomials of this categorification.

\subsection*{Main result}
Let $J=J(Q,W)$ be the non-commutative crepant resolution of the conifold, a quiver algebra
with relations coming from the Klebanov--Witten potential $W$ (see Section~\ref{sec:coniquiver} for details). 
Let $\Jtilde=J(\Qtilde,W)$ be 
the framed algebra given by adding the new vertex $\infty$ to the quiver of $J$.
In \cite{nagao_counting}, the authors introduce a notion of $\ze$-(semi)stability of 
$\Jtilde$-modules $\wtl{V}$ with $\dim \wtl{V}_\infty\leq 1$ for a stability 
parameter $\ze=(\ze_0,\ze_1)\in \mathbb{R}^2$. 

Let $\al\in\cN^2$ and let ${\MM}_\ze(\Jtilde,\al)$ be the moduli space of $\ze$-stable $\Jtilde$-modules $\wtl{V}$, with $\udim\wtl{V}=(\al,1)$.
We want to compute the motivic generating series
\[
Z_\ze(y_0,y_1)=
\sum_{\al\in\cN^2}
\Bigl[{\MM}_\ze\bigl(\Jtilde,\al\bigr)\Bigr]_\vir \cdot y_0^{\al_0}y_1^{\al_1}\in \MC[[y_0, y_1]].
\]
Here $[\bullet]_\vir$ denotes the {\it virtual motive} (see Section \ref{subsec_11}), an element of a suitable ring of motives $\MC$. 

As proved in~\cite{nagao_counting}, the stability parameter space $\R^2$ is a countable union of chambers, within which 
the moduli spaces and therefore the generating series $Z_\ze$ remain unchanged. The chambers are separated
by a set of walls, defined by a set of positive roots 
\[\Delta_+=\Delta^{\re}_+\sqcup \Delta^{\im}_+,\]
where
\begin{align*}
\Delta^{\re}_+&=\{(i,i-1)\mid i\geq 1\}\cup\{(i-1,i)\mid i\ge 1\},\\
\Delta^{\im}_+&=\{(i,i)\mid i\geq 1\}.
\end{align*}
To each element $\al=(\al_0, \al_1)\in \Delta_+$, we associate a finite product as follows: for 
real roots $\al\in \Delta^{\re}_+$, put
\[ Z_\al(-y_0, y_1) = \prod_{j=0}^{\al_0-1} \left(1-\cL^{-\frac{\al_0}2+\oh+j}y_0^{\al_0}y_1^{\al_1}\right), 
\]
whereas for imaginary roots $\al\in\Delta^{\im}_+$, put
\[ Z_\al(-y_0, y_1) 
=\prod_{j=0}^{\al_0-1}
\left(1-\cL^{-\frac{\al_0}2+1+j}y_0^{\al_0}y_1^{\al_1}\right)^{-1}
\left(1-\cL^{-\frac{\al_0}2+2+j}y_0^{\al_0}y_1^{\al_1}\right)^{-1}. 
\]
Our main result is the following product formula: 

{\def\thethm{1}
\begin{thm}\label{announce_thm_main_res}
For $\ze\in\R^2$ not orthogonal to any root, 
$$Z_\ze(y_0,y_1)=\prod_{\substack{\al\in\De_+\\ \ze\cdot\al<0}}Z_{\al}(y_0, y_1).$$
\end{thm}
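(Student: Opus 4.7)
My plan is to reduce the theorem to two ingredients: a trivial base case in one extremal chamber, and a motivic wall-crossing rule that isolates the factor picked up when $\ze$ crosses the wall $\al^\perp$ of a single positive root. For the base case I take the chamber $C_0 \subset \{\ze_0 > 0, \ze_1 > 0\}$. Since every $\al \in \Delta_+$ has nonnegative coordinates, no wall $\al^\perp$ enters $C_0$, so $C_0$ is indeed a single chamber. A direct check of the framed $\ze$-stability condition from \cite{nagao_counting} shows that for $\ze \in C_0$ any nonzero submodule with $\dim U_\infty = 0$ destabilizes, so the only $\ze$-stable framed $\Jtilde$-module has $\udim \wtl V = (0,0,1)$. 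Hence $Z_\ze \equiv 1$ on $C_0$, matching the empty product on the right-hand side.

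Next I would establish the wall-crossing rule: for each wall $\al^\perp$ and any $\ze^-, \ze^+$ in adjacent chambers with $\ze^- \cdot \al > 0 > \ze^+ \cdot \al$, one has $Z_{\ze^+} = Z_{\ze^-} \cdot A_\al$ for a universal motivic series $A_\al$ depending only on $\al$. This is a motivic refinement of the Kontsevich--Soibelman wall-crossing formula, available here because the virtual motives $[\MM_\ze]_\vir$ are built from motivic vanishing cycles of the Klebanov--Witten potential, which is compatible with the torus action on the conifold and with Harder--Narasimhan stratifications in a Thom--Sebastiani sense, in the spirit of \cite{kontsevich_stability, kontsevich_cohomological, mozgovoy_motivica, nagao_wall-crossing}. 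Iterating this rule over all walls separating $\ze$ from $C_0$ gives
\[ Z_\ze(y_0, y_1) = \prod_{\al \in \Delta_+,\ \ze \cdot \al < 0} A_\al(y_0, y_1), \]
so the theorem reduces to the identification $A_\al = Z_\al$ for every $\al \in \Delta_+$.

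To compute $A_\al$ I would place $\ze$ in a chamber immediately adjacent to $\al^\perp$, so that the only new semistable contribution comes from framed modules whose underlying unframed part lives in the semistable subcategory at the slope determined by $\al$. For a real root $\al \in \Delta_+^\re$, that subcategory consists of iterated extensions of a single rigid simple $S_\al$ with a one-loop endomorphism, so the motivic series collapses to a standard Grassmannian / nilpotent-endomorphism computation coupled to a framing vector, producing the finite product appearing in $Z_\al$, with the $\cL$-power shifts dictated by the virtual-motive convention. For an imaginary root $\al = (i,i) \in \Delta_+^\im$, the semistable subcategory is parametrized (generically) by a $\P^1$-family of simples coming from the small resolution, and the corresponding motivic generating series is exactly the content of Theorem~\ref{eq univ exp form} cited in the introduction.

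The principal obstacle is the imaginary-root evaluation, since the relevant moduli carry the full nontrivial Klebanov--Witten superpotential and each dimension $(i,i)$ contributes a non-rigid family of representatives, so the product does not truncate. The paper attacks this step in two independent ways: by a direct motivic computation of a space of matrix tuples cut out by $dW$ together with a framing vector, and by a further dimensional reduction to a tame quiver without potential whose motivic class is accessible in closed form. Once that building block is supplied, the inductive wall-crossing from $C_0$ produces the product formula in every chamber.
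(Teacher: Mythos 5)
Your overall architecture (start in the trivial chamber $\ze_0,\ze_1>0$, where the base case you check is indeed correct, then cross walls one at a time) is a legitimate route --- it is essentially the strategy of \cite{mozgovoy_wall-crossing} and of \cite{nagao_counting} for the numerical invariants, and differs from the paper, which compares only two stability parameters at once via the universal unframed series. But as written there is a genuine gap at the central step. You postulate a wall-crossing rule $Z_{\ze^+}=Z_{\ze^-}\cdot A_\al$ with a universal factor and propose to identify $A_\al$ with $Z_\al$ by analyzing the unframed semistable subcategory near the wall. For framed invariants, however, the wall factor is not the motivic series of unframed semistables at that slope: it is the ratio $A^\al(-\cL^{\oh}y_0,y_1)/A^\al(-\cL^{-\oh}y_0,y_1)$, where $A^\al$ is the unframed factor of Theorem~\ref{thm main universal result} and the half-integer Lefschetz shifts come from the commutation relation \eqref{eq:y infty rel} between $y_\infty$ and $y^\al$ in the framed twisted algebra. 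Establishing this is exactly the content of Theorem~\ref{thm_framed_vs_nonframed} (proved in the paper from the unique three-step filtration of Proposition~\ref{prop_filtration} together with $\wtl A_U=A_U\cdot y_\infty$, or quoted from \cite{mozgovoy_wall-crossing}), and nothing in your sketch supplies it. In particular, your claim that for an imaginary root the factor ``is exactly the content of Theorem~\ref{eq univ exp form}'' is not correct: that theorem computes the unframed series, whose imaginary-root piece is the infinite product $\Exp\bigl(\tfrac{1+\cL}{1-\cL\inv}y^\al\bigr)$, whereas $Z_\al$ is a finite product; passing from one to the other is precisely the missing framed/unframed comparison. For real roots, the ``Grassmannian/nilpotent-endomorphism computation coupled to a framing vector'' is not carried out and would amount to redoing that comparison by hand in a special case.

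Two further points would need repair even granting the wall factor. First, walls are not in bijection with roots: $L_\infty$ is orthogonal to \emph{all} imaginary roots simultaneously, so crossing it multiplies $Z_\ze$ by the infinite product $\prod_{i\ge1}Z_{(i,i)}$ rather than by a single $A_\al$, and your rule as stated does not cover this wall. Second, for $\ze$ with $\ze_0+\ze_1<0$ (e.g.\ the DT and NCDT chambers) infinitely many walls separate $\ze$ from your base chamber, so the induction must be organized coefficient-by-coefficient (for a fixed dimension vector only finitely many walls are relevant), which you do not address. With these fixed and with the framed wall-crossing identity supplied, your scheme would indeed yield the theorem, with Theorem~\ref{eq univ exp form} as the remaining hard input --- but that identity is the heart of the proof here, not a routine citation.
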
}

By~\cite{behrend_donaldson-thomas, behrend_motivic}, the specialization $Z_\ze(y_0, y_1)|_{\LL^\oh\to 1}$ is the DT-type series at the generic 
stability parameter $\ze$, computed in special cases 
in~\cite{behrend_super-rigid,maulik_gromov-witten, szendroi_non-commutative, young_computing} 
and in general in~\cite{chuang_wall, nagao_counting}. Previously, all these results have been obtained by torus localization; we
obtain new proofs of all these formulae. Since~\cite{nagao_counting} identifies the DT and PT chambers for~$Y$, we in particular 
get motivic results for these two chambers. 

{\def\thethm{2}
\begin{cor}\label{announce_thm_main_cor}
The refined DT and PT series of the resolved conifold are given by the formulae
\[
Z_{\rm PT}(-s, T) = \prod_{m\geq 1}\prod_{j=0}^{m-1} \left(1- \cL^{-\frac m2+\oh+j} s^m T \right)
\]
and
\[Z_{\rm DT}(-s, T) = Z_{\rm PT}(-s, T)\cdot\prod_{m\geq 1}\prod_{j=0}^{m-1}\left(1-\cL^{-\frac m2+1+j}s^m\right)^{-1} \left(1-\cL^{-\frac m2+2+j}s^m\right)^{-1},
\]
written in the geometric variables $s,T$, with $s$ representing the point class and $T$ representing the curve class as usual.
\end{cor}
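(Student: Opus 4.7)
The plan is to deduce Corollary~\ref{announce_thm_main_cor} directly from Theorem~\ref{announce_thm_main_res}, by evaluating the general product formula at two specific chambers of~$\R^2$ identified in~\cite{nagao_counting} as reproducing the geometric PT and DT moduli on $Y$, and then passing to the geometric variables $(s,T)$.

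First I would invoke the chamber description from~\cite{nagao_counting}: there exist generic $\ze_{\rm PT},\ze_{\rm DT}\in\R^2$ such that the set $\{\al\in\De_+:\ze\cdot\al<0\}$ is exactly the ``one-sided'' family of real roots $\{(i,i-1):i\geq 1\}$ when $\ze=\ze_{\rm PT}$, and is that family together with all the imaginary roots $\{(i,i):i\geq 1\}$ when $\ze=\ze_{\rm DT}$. Moreover, the framed quiver moduli in these two chambers realize the geometric PT and DT moduli on $Y$ via the derived equivalence used in~\cite{szendroi_non-commutative,nagao_counting}, so that the series $Z_{\ze_{\rm PT}}$ and $Z_{\ze_{\rm DT}}$ become the motivic PT and DT series once expressed in the geometric variables.

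Theorem~\ref{announce_thm_main_res} then yields immediately
\[
Z_{\ze_{\rm PT}}(y_0,y_1)=\prod_{i\geq 1}Z_{(i,i-1)}(y_0,y_1),\quad Z_{\ze_{\rm DT}}(y_0,y_1)=Z_{\ze_{\rm PT}}(y_0,y_1)\cdot\prod_{i\geq 1}Z_{(i,i)}(y_0,y_1).
\]
To rewrite these in $(s,T)$, I would use the correspondence $(\al_0,\al_1)=(n,n-d)$ between the quiver dimension vector and the pair (Euler characteristic $n$, curve class $d$) of the underlying geometric object. This dictates the substitution $y_0\mapsto -sT$, $y_1\mapsto T\inv$, the minus sign implementing the shift $s\mapsto -s$ in the argument of $Z_{\rm PT}$ and $Z_{\rm DT}$. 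Under this substitution, the monomial $(-y_0)^{\al_0}y_1^{\al_1}$ appearing in the formulae for $Z_\al(-y_0,y_1)$ stated before Theorem~\ref{announce_thm_main_res} becomes $s^{\al_0}T^{\al_0-\al_1}$, i.e.\ $s^iT$ for $\al=(i,i-1)$ and $s^i$ for $\al=(i,i)$. Plugging in, the real-root product reduces to the PT formula and the imaginary-root product to the extra MacMahon-type factor in the DT formula.

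The main obstacle is not in this final derivation but in the chamber-identification step: while Theorem~\ref{announce_thm_main_res} delivers the algebraic product formula, its translation to geometric PT and DT invariants rests on the wall-crossing analysis of~\cite{nagao_counting} together with the compatibility of their geometric identification with the motivic refinement $[\bullet]_\vir$. Granting these inputs, Corollary~\ref{announce_thm_main_cor} is a direct specialization of the main theorem.
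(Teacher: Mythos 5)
Your proposal is correct and follows essentially the same route as the paper: the paper also picks explicit generic stability parameters in the PT and DT chambers identified by Nagao--Nakajima (e.g.\ $\ze=(-1+\eps,1)$ for PT), observes that the set $\{\al\in\De_+:\ze\cdot\al<0\}$ is $\{(m,m-1)\}$ resp.\ $\{(m,m-1)\}\cup\{(m,m)\}$, specializes Theorem~\ref{thm_main_res}, and then rewrites the result in the large-radius variables $s=y_0y_1$, $T=y_1^{-1}$, exactly as you do (your sign bookkeeping via $y_0\mapsto -sT$ is equivalent). The geometric identification of the chamber moduli spaces is likewise taken from~\cite{nagao_counting} in the paper, so no extra compatibility argument beyond what you grant is needed.
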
}

Thus in particular we compute the first instance of a motivic DT partition function for the original geometric problem of rank-1 invariants of ideal sheaves of points and curves~\cite{maulik_gromov-witten} where 
a curve is present. Corollary~\ref{announce_thm_main_cor} also proves the motivic version
of the DT/PT wall crossing formula. These results are compared to the expectation from the refined topological vertex~\cite{iqbal_refined}
in Section~\ref{sec:ref-top-vx}. 

\section{Preliminaries}

\subsection{Motives}\label{subsec_11}
We are working in a version of the ring of motivic weights: let $\MC$ denote the $K$-group of the category of effective Chow motives over $\C$, extended by $\LL^{-\oh}$, where $\LL$ is the Lefschetz motive. It has a natural structure of a 
\la-ring \cite{getzler_mixed,heinloth_note} (see Section \ref{lambda} for the definition of a \la-ring) with $\si$-operations defined by $\si_n([X])=[X^n/S_n]$ and $\si_n(\cL^\oh)=\cL^{\frac n2}$. There is a dimensional completion \cite{behrend_motivica}
$$\tildeMC=\MC\pser{\cL\inv},$$
which is also a \la-ring. Note that in this latter ring, the elements $(1-\cL^n)$, and therefore the motives of general 
linear groups, are invertible. The rings $\MC\sb\tildeMC$ sit in larger rings $\MC^{\hat{\mu}}\subset\tildeMC^{\hat{\mu}}$ of equivariant motives, 
where $\hat\mu$ is the group of all roots of unity~\cite{looijenga_motivic}.

The map that sends a smooth projective variety $X$ to its $E$-polynomial
$$E(X,u,v)=\sum_{p,q\ge0}(-1)^{p+q}\dim H^{p,q}(X,\cC)u^pv^q$$
can be extended to the ring homomorphism $E:\tildeMC\to\cQ[u,v]\pser{(uv)^{-\oh}}$. This map is a \la-ring homomorphism, where the \la-ring structure on $\cQ[u,v]\pser{(uv)^{-\oh}}$ is given by Adams operations (see Section \ref{lambda})
$$\psi_n(f(u,v))=f(u^n,v^n).$$
The map $E:\MC\to\cQ[u,v,(uv)^{-\oh}]$ can be further specialized to the Euler number $e:\MC\to\cQ$ by $u\mto1,v\mto1,(uv)^{-\oh}\mto1$.

\begin{rk}
Note that the Euler number specialization of $\cL^\oh$ is $\cL^\oh\mto1$. This differs from the conventions of \cite{behrend_motivic}, where the specialization is $\cL^\oh\mto-1$. This difference results from the fact that \cite{behrend_motivic} uses the \la-ring structure on $\MC$ with $\si_n(-\cL^\oh)=(-\cL^\oh)^n$ \cite[Remark 1.7]{behrend_motivic}.
\end{rk}

Let $f\colon X \to \cC$ be a regular function on a smooth variety X.
Using arc spaces, Denef and Loeser \cite{denef_geometry,looijenga_motivic} define the motivic nearby 
cycle $[\psi_f]\in \lM_\cC^{\hat{\mu}}$ and the motivic vanishing cycle 
\[ [\varphi_f]=[\psi_f]-[f\inv(0)]\in \lM_{\cC}^{\hat{\mu}}\]
of~$f$. Note that if $f=0$, then $[\vi_0]=-[X]$. The following result was proved in \cite[Prop.~1.11]{behrend_motivic}.

\begin{thm}
\label{thr:bbs}
Let $f:X\to\cC$ be a regular function on a smooth variety X.
Assume that $X$ admits a $\cC^*$-action such that $f$ is $\cC^*$-equivariant \ie $f(tx)=tf(x)$ for $t\in\cC^*$, $x\in X$, and 
such that there exist limits $\lim_{t\to0}tx$ for all $x\in X$. Then
$$[\vi_f]=[f\inv(1)]-[f\inv(0)]\in\lM_\cC\sb\lM_\cC^{\hat\mu}.$$
\end{thm}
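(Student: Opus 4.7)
The plan is to use the $\cC^*$-action to trivialize $f$ over the punctured base $\cC^*$, and then to read off $[\psi_f]$ and $[\vi_f]$ from this trivialization.

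First, since $f(tx)=tf(x)$, the stabilizer of every $x\in X$ with $f(x)\ne 0$ is trivial, so $\cC^*$ acts freely on the open subvariety $U:=X\setminus f\inv(0)$. The morphism
\[
\Phi\colon U\to \cC^*\xx f\inv(1),\qquad x\mto\bigl(f(x),\,f(x)\inv\cdot x\bigr),
\]
is an isomorphism of varieties with inverse $(t,y)\mto ty$, and intertwines $f|_U$ with the projection to the first factor. In particular every fiber $f\inv(t)$ with $t\ne 0$ is isomorphic to $f\inv(1)$, and the family $f|_U$ is trivial.

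To compute $[\psi_f]$, I would apply the Denef--Loeser formula to a $\cC^*$-equivariant log resolution $\pi\colon Y\to X$ of $(X,f\inv(0))$; such a resolution exists by equivariant resolution of singularities for torus actions. Writing $(f\circ\pi)\inv(0)=\sum_{i\in S} a_i E_i$ as a simple normal crossings divisor, $[\psi_f]$ is expressed as a sum over non-empty $I\sb S$ of contributions involving cyclic covers $\wtl{E}_I^\circ\to E_I^\circ$ of degree $d_I=\gcd_{i\in I}a_i$, each carrying the $\hat\mu$-structure. At any $\cC^*$-fixed point $p\in E_I^\circ$, diagonalizing the tangent action yields local coordinates $x_i$ with integer weights $w_i\ge 0$, with $f\circ\pi=\prod_{i\in I}x_i^{a_i}\cdot u$ for an invariant unit $u$. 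Equivariance of $f$ then forces $\sum_{i\in I}w_ia_i=1$, so $d_I\mid 1$ and hence $d_I=1$; on the complement of the fixed locus the free $\cC^*$-action trivializes the cyclic cover. Either way the $\hat\mu$-structure on each contribution is trivial, and summing the contributions using $\Phi$ to identify strata gives $[\psi_f]=[f\inv(1)]\in\MC$. The definition $[\vi_f]=[\psi_f]-[f\inv(0)]$ then yields the claimed formula.

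The main obstacle is making rigorous the trivialization of the $\hat\mu$-action on the Denef--Loeser cyclic covers in strata $E_I^\circ$ that contain no $\cC^*$-fixed points. Topologically the triviality of monodromy is immediate from $\Phi$; motivically, one must check that the $\cC^*$-action lifts from the base of the branched cover to a $\cC^*$-action on the cover that splits it. An alternative is to bypass the equivariant resolution entirely by computing the motivic zeta function $Z_f(T)$ directly: multiplication by $t$ on arc space shifts $f$-order by $1$, which should identify the motive of arcs of order $n$ with $[f\inv(1)]$ up to a Lefschetz factor, allowing one to sum the geometric series and read off both the value $[f\inv(1)]$ and the trivial $\hat\mu$-structure at once.
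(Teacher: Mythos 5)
Your opening reductions are fine: the action is indeed free off $f\inv(0)$, the map $x\mapsto(f(x),f(x)\inv\cdot x)$ trivializes $f$ over $\cC^*$, and the weight argument at a $\cC^*$-fixed point of an equivariant resolution is correct (the unit is forced to have weight $0$, so $\sum_{i\in I}w_ia_i=1$ and $d_I\mid 1$). The genuine gap is at the final, central step: ``summing the contributions using $\Phi$ to identify strata gives $[\psi_f]=[f\inv(1)]$''. The Denef--Loeser strata $\widetilde E_I^\circ$ all sit over the special fibre $f\inv(0)$, while $\Phi$ only describes the family away from it, so nothing in your argument relates $\sum_I(1-\cL)^{|I|-1}[\widetilde E_I^\circ]$ to the general fibre. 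This is precisely where the hypothesis on limits must enter, and it cannot be finessed: take $X=\cC^2\setminus\{0\}$, $f(x,y)=x$, $t\cdot(x,y)=(tx,ty)$. Then $f$ has weight one, the action is free, the family is trivial over $\cC^*$, and $f\inv(0)$ is already smooth of multiplicity one, so every ingredient you actually establish (trivial covers, trivial $\hat\mu$-structure) holds; yet $[\psi_f]=[f\inv(0)]=\cL-1\ne\cL=[f\inv(1)]$, and $[\vi_f]=0\ne[f\inv(1)]-[f\inv(0)]$. The only hypothesis that fails is the existence of limits, so a correct proof must use the contraction to compare the special-fibre strata (or the arc spaces) with the general fibre, e.g.\ via the Bia{\l}ynicki-Birula retraction on the equivariant resolution; your proposal supplies no such mechanism. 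The same objection applies to your alternative sketch: the shift $\gamma(t)\mapsto t\cdot\gamma(t)$ on arcs is injective but not surjective onto arcs of one higher order (its image consists of arcs $\delta$ for which $t\inv\cdot\delta(t)$ extends over $t=0$), and identifying the order-$n$ coefficient of the zeta function with $[f\inv(1)]$ times a power of $\cL$ is again exactly where circle-compactness must do the work.

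The obstacle you flag for strata without fixed points is also real, and your suggested fix fails: for $X=\cC^2$, $f=x^2y$, $t\cdot(x,y)=(x,ty)$, the stratum $E_1^\circ=\{x=0,\ y\ne0\}$ contains no fixed points, has $d_1=2$, and its double cover $\{z^2=y\}$ is geometrically nontrivial; the $\cC^*$-action does not lift to it. The theorem still holds there only because $[\widetilde E_1^\circ]$ with its $\mu_2$-action equals $\cL-1$ with trivial action by the relations imposed in $\lM_\cC^{\hat\mu}$, i.e.\ the statement is an identity of classes, not a stratum-by-stratum triviality of monodromy, so the strategy ``show every cover is trivial'' cannot work in general. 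Note finally that the paper itself does not prove this theorem but quotes it from Behrend--Bryan--Szendr\H oi [Prop.~1.11], so your write-up would have to stand on its own; as it stands, the key identity is asserted rather than proved.
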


Following~\cite{behrend_motivic}, we define the {\it virtual motive} of $\crit(f)$ to be
\[
[\crit(f)]_\vir = -(-\cL^\oh)^{-\dim X}[\varphi_f]\in \lM^{\hat{\mu}}_\cC.
\]
Thus for a smooth variety $X$ with $f=0$, 
\[
[X]_\vir= [\crit(0_X)]_\vir=(-\cL^\oh)^{-\dim X}\cdot [X].
\]

\begin{rk} The ring $\MC$ is known to be a homomorphic image of the naive motivic 
ring $K_0({\rm Var}_\C)[\LL^{-\oh}]$. Some of the works cited above work in this ring; the quoted constructions
and results carry over to $\MC$ under this ring homomorphism. We prefer to work in $\MC$ since that is known to 
be a \la-ring. 
\end{rk}

\subsection{\texorpdfstring{\la}{Lambda}-Rings and power structures}
\label{lambda}
Let
$$\La=\varprojlim\cZ\pser{x_1,\dots,x_n}^{S_n}$$
be the ring of symmetric functions \cite{macdonald_symmetric}. It is well-known that \La is generated as an algebra over $\cZ$ by elementary symmetric functions
$$e_n=\sum_{i_1<\dots<i_n}x_{i_1}\dots x_{i_n}$$
as well as by complete symmetric functions
$$h_n=\sum_{i_1\le\dots\le i_n}x_{i_1}\dots x_{i_n}.$$
Moreover $\La_\cQ=\La\ts\cQ$ is generated over $\cQ$ by power sums $p_n=\sum x_i^n$.

A $\cQ$-algebra $R$ is called a \la-ring if it is endowed with a map $\circ:\La\xx R\to R$ called plethysm, such that $(-\circ r):\La\to R$ is a ring homomorphism for any $r\in R$ and the maps $\psi_n=(p_n\circ-)$, called Adams operations, are ring homomorphisms satisfying $\psi_1=\Id_R$ and $\psi_m\psi_n=\psi_{mn}$ for $m,n\ge1$. Note that plethysm is uniquely determined by Adams operations. It is also uniquely determined by maps $\la_n=(e_n\circ-):R\to R$ called \la-operations and by maps $\si_n=(h_n\circ-):R\to R$ called \si-operations.

Given a \la-ring $R$, we endow the ring $A=R\pser{x_1,\dots,x_m}$ with a \la-ring structure by the rule
$$\psi_n(rx^\al)=\psi_n(r)x^{n\al},\qquad r\in R,\al\in\cN^{m}.$$
Let $A_+\sb A$ be an ideal generated by $x_1,\dots,x_m$. We define a map $\Exp:A_+\to 1+A_+$, called plethystic exponential, by the rule \cite{getzler_mixed,mozgovoy_computational}
$$\Exp(f)=\sum_{n\ge0}\si_n(f)=\exp\bigg(\sum_{n\ge1}\frac1n\psi_n(f)\bigg).$$
This map has an inverse $\Log:1+A_+\to A_+$, called plethystic logarithm,
$$\Log(f)=\sum_{n\ge1}\frac{\mu(n)}{n}\psi_n\log(f),$$
where $\mu$ is a M\"obius function.

We define a power structure map $\Pow:(1+A_+)\xx A\to 1+A_+$ by the rule~\cite{mozgovoy_computational}
$$\Pow(f,g)=\Exp(g\Log(f)).$$
In the case when $R$ is a ring of motives, the power structure map has the following geometric interpretation \cite{gusein-zade_powerb}. Let
$$f=1+\sum_{\al>0}[A_\al]x^\al,$$
where $A_\al$ are algebraic varieties. Then
$$\Pow(f,[X])=\sum_{k:\cN^m\to\cN}\bigg[\Big(F_{\n k}X\xx\prod_{\al\in\cN^m}A_{\al}^{k(\al)}\Big)/\prod_{\al\in\cN^m}S_{k(\al)}\bigg]x^{\sum k(\al)\al},$$
where the sum runs over maps $k:\cN^m\to\cN$ with finite support, $\n k=\sum_{\al\in\cN^m}k(\al)$, the configuration space $F_nX$ is given by
$$F_nX=\sets{(x_1,\dots,x_n)\in X^n}{x_i\ne x_j\text{ for }i\ne j},$$
and the product of symmetric groups $\prod_{\al\in\cN^m}S_{k(\al)}$ acts on both factors in the obvious way. The quotient in square brackets parametrizes elements in
$$\bigcup_{\psi:X\to\cN^m}\prod_{x\in X}A_{\psi(x)}$$
with $\psi:X\to\cN^m$ satisfying $\#\sets{x\in X}{\psi(x)=\al}=k(\al)$ for any $\al\in\cN^m\ms\set0$ (see \cite{mozgovoy_motivicb}). Therefore we can also write
$$\Pow(f,[X])=\sum_{\psi:X\to\cN^m}\prod_{x\in X}[A_{\psi(x)}]x^{\psi(x)},$$
where the sum runs over maps $\psi:X\to\cN^m$ with finite support.

\subsection{Quivers and moduli spaces}
Let $Q$ be a quiver, with vertex set $Q_0$ and edge set $Q_1$.
For an arrow $a\in Q_1$, we denote by $s(a)\in Q_0$ (resp.\ $t(a)\in Q_0$) the vertex at which $a$ starts (resp.\ ends).
We define the Euler-Ringel form $\chi$ on $\cZ^{Q_0}$ by the rule
$$\chi(\al,\be)=\sum_{i\in Q_0}\al_i\be_i-\sum_{a\in Q_1}\al_{s(a)}\be_{t(a)},\qquad \al,\be\in\cZ^{Q_0}.$$
We define the skew-symmetric bilinear form $\ang{\bullet,\bullet}$ of the quiver $Q$ to be 
$$\ang{\al,\be}=\chi(\al,\be)-\chi(\be,\al),\qquad \al,\be\in\cZ^{Q_0}.$$

Given a $Q$-representation $M$, we define its dimension vector $\udim M\in\cN^{Q_0}$ by $\udim M=(\dim M_i)_{i\in Q_0}$.
Let $\al\in\cN^{Q_0}$ be a dimension vector and let $V_i=\cC^{\al_i}$, $i\in Q_0$.
We define
$$R(Q,\al)=\bigoplus_{a\in {Q_1}}\Hom(V_{s(a)},V_{t(a)})$$
and 
\[\GG_\al=\prod_{i\in Q_0}\GL(V_i).\]
Note that $\GG_\al$ naturally acts on $R(Q,\al)$ and the quotient stack
$$\gM(Q,\al)=[R(Q,\al)/\GG_\al]$$
gives the moduli stack  of representations of $Q$ with dimension vector $\al$.

Let $W$ be a potential on $Q$, a finite linear combination of cyclic paths in $Q$. Denote by $J=J_{Q,W}$ 
the Jacobian algebra, the quotient of the path algebra $\C Q$ by the two-sided ideal generated by formal partial
derivatives of the potential $W$. Let
$$f_\al:R(Q,\al)\to\cC$$ be the $\GG_\al$-invariant function defined by taking the trace of the map associated 
to the potential $W$. As it is now well known~\cite[Proposition 3.8]{segal_a}, a point in the critical locus 
$\crit(f_\al)$ corresponds to a $J$-module. The quotient stack 
$$\gM(J, \al)=\bigl[\crit(f_\al)/\GG_\al\bigr]$$
gives the moduli stack of $J$-modules with dimension vector $\al$.

\begin{defn}
\label{central charge}
A {\it central charge} is a group homomorphism $Z:\cZ^{Q_0}\to \C$
such that $$Z(\al)\in\cH_+=\sets{re^{i\pi\vi}}{r>0,0<\vi\le1}$$
for any $\al\in \cN^{Q_0}\ms\set0$. 
Given $\al\in \cN^{Q_0}\ms\set0$, the number $\vi(\al)=\vi\in(0,1]$ such that 
$Z(\al)=re^{i\pi\vi}$, for some $r>0$, is called the phase of \al.
\end{defn}

\begin{defn}
\label{semist}
For any nonzero $Q$-representation (resp.\ $J$-module) $V$, we define $\vi(V)=\vi(\udim V)$.
A $Q$-representation (resp.\ $J$-module) $V$ is said to be $Z$-(semi)stable if for any proper nonzero $Q$-subrepresentation (resp.\ $J$-submodule) $U\sb V$ we have
$$\vi(U)(\le)\vi(V).$$
\end{defn}

\begin{defn}
\label{Z from ze}
Given $\zeta\in\cR^{Q_0}$, define the central charge $Z:\cZ^{Q_0}\to\cC$ by the rule
$$Z(\al)=-\zeta\cdot\al+i\n\al,$$
where $\n\al=\sum_{i\in Q_0}\al_i$.
We say that a $Q$-representation (resp.\ $J$-module) is \ze-(semi)stable if it is $Z$-(semi)stable.
\end{defn}

\begin{rk}
\label{rmr:mu}
Let the central charge $Z$ be as in Definition \ref{Z from ze}.
Define the slope function $\mu:\cN^{Q_0}\ms\set0\to\cR$ by $\mu(\al)=\frac{\ze\cdot\al}{\n\al}$. If $l\sb\cH=\cH_+\cup\set0$ is a ray such that $Z(\al)\in l$ then $l=\cR_{\ge0}(-\mu(\al),1)$. This implies that $\vi(\al)<\vi(\be)$ if and only if $\mu(\al)<\mu(\be)$.
\end{rk}

We say that $\ze\in\cR^{Q_0}$ is \al-generic if for any $0<\be<\al$ we have $\vi(\be)\ne\vi(\al)$.
This condition implies that any \ze-semistable $Q$-representation (resp.\ $J$-module) is automatically \ze-stable.

Let $R_\ze(Q,\al)$ denote the open subset of $R(Q,\al)$ consisting of \ze-semistable representations.
Let $f_{\ze,\al}$ denote the restriction of $f_{\al}$ to $R_\ze(Q,\al)$. 
The quotient stacks
\begin{equation}
\gM_\ze(Q,\al)=\bigl[R_\ze(Q,\al)/\GG_\al\bigl],\qquad
\gM_\ze(J, \al)=\bigl[\crit(f_{\ze,\al})/\GG_\al\bigr]
\label{eq:def moduli stacks}
\end{equation} 
give the moduli stacks of $Q$-representations and $J$-modules with dimension vector~\al.

\subsection{Motivic DT invariants}
Let $(Q,W)$ be a quiver with a potential and let $J=J_{Q,W}$ be its Jacobian algebra. Recall that the degeneracy locus of the function $f_\al:R(Q,\al)\to\cC$
defines the locus of $J$-modules, so that the quotient stack $$\gM(J,\al)=[\crit(f_\al)/\GG_\al]$$
is the stack of $J$-modules with dimension vector~$\al$. 
We define motivic Donaldson-Thomas invariants
$$[\gM(J,\al)]_\vir=\frac{[\crit(f_\al)]_\vir}{[\GG_\al]_\vir},$$
where $[\GG_\al]_\vir$ refers to the virtual motive of the pair $(\GG_\al,0)$. 

\begin{defn}
A subset $I\subset Q_1$ is called a cut of $(Q,W)$ if in the associated grading $g_I$ on $Q$ given by
$$g_I(a) = 
\begin{cases}
1 & a \in I,\\
0 & a \notin I,
\end{cases}$$
the potential $W$ is homogeneous of degree $1$.
\end{defn}

Throughout this section we assume that $(Q,W)$ admits a cut. Then the space $R(Q,\al)$ admits a $\cC^*$-action satisfying the conditions of Theorem \ref{thr:bbs} for the function $f_\al:R(Q,\al)\to\cC$. This implies
\begin{multline}
\label{eq:bbs1}
[\gM(J,\al)]_\vir
=(-\cL^\oh)^{-\dim R(Q,\al)}\frac{[f_\al\inv(0)]-[f_\al\inv(1)]}{[\GG_\al]_\vir}\\
=(-\cL^\oh)^{\hi(\al,\al)}\frac{[f_\al\inv(0)]-[f_\al\inv(1)]}{[\GG_\al]}.
\end{multline}
Generally, for an arbitrary stability parameter $\ze$, we define
\begin{equation}
[\gM_\ze(J,\al)]_\vir
=(-\cL^\oh)^{\hi(\al,\al)}\frac{[f^{-1}_{\ze,\al}(0)]-[f^{-1}_{\ze,\al}(1)]}{[\GG_\al]},
\label{eq:bbs2}
\end{equation}
where, as before, $f_{\ze,\al}$ denote the restriction of $f_{\al}:R(Q,\al)\to\cC$ to $R_\ze(Q,\al)$.

\begin{lem}
Let $\al\in\cN^{Q_0}$ be such that $\al_i=1$ for some $i\in Q_0$ (this will be the case for framed representations studied later) and let $\ze\in\cR^{Q_0}$ be \al-generic. Then
$$
[\gM_\ze(J,\al)]_\vir=\frac{[\crit(f_{\ze_,\al})]_\vir}{[\GG_\al]_\vir}.
$$
\end{lem}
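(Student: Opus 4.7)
The plan is to reduce the lemma to a direct application of Theorem \ref{thr:bbs} to the restricted function $f_{\ze,\al}\colon R_\ze(Q,\al)\to\cC$. Unpacking the right-hand side using the definitions of Section \ref{subsec_11}, together with $\dim R_\ze(Q,\al)=\dim R(Q,\al)$ (openness) and the smoothness of $\GG_\al$, I compute
\[
\frac{[\crit(f_{\ze,\al})]_\vir}{[\GG_\al]_\vir}
=\frac{-(-\cL^\oh)^{-\dim R(Q,\al)}\,[\vi_{f_{\ze,\al}}]}{(-\cL^\oh)^{-\dim\GG_\al}\,[\GG_\al]}
=-(-\cL^\oh)^{\hi(\al,\al)}\cdot\frac{[\vi_{f_{\ze,\al}}]}{[\GG_\al]},
\]
using $\hi(\al,\al)=\dim\GG_\al-\dim R(Q,\al)$. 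Comparing with the definition (\ref{eq:bbs2}) of the left-hand side, the identity to prove reduces to
\[
[\vi_{f_{\ze,\al}}]=[f_{\ze,\al}\inv(1)]-[f_{\ze,\al}\inv(0)],
\]
which is precisely the conclusion of Theorem \ref{thr:bbs} applied to $f_{\ze,\al}$.

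It therefore suffices to verify the two hypotheses of that theorem for $f_{\ze,\al}$ on $R_\ze(Q,\al)$. Smoothness of $R_\ze(Q,\al)$ is automatic since it is an open subset of the smooth affine space $R(Q,\al)$. For the $\cC^*$-action, I restrict the cut action on $R(Q,\al)$, which scales the arrows lying in $I$ by $t$ and fixes the others; with respect to this, $f_\al$ is homogeneous of weight one since $W$ has degree one in $g_I$. For $t\neq 0$ this scaling does not alter the set of subspace decompositions realizing a subrepresentation or their dimension vectors, and hence preserves $\ze$-semistability; so the action restricts to $R_\ze(Q,\al)$ away from $t=0$.

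The main obstacle is to check that the limit $\lim_{t\to 0}tx$, which exists in the ambient $R(Q,\al)$ by assumption, actually lies in the open locus $R_\ze(Q,\al)$, i.e., that degenerating all cut arrows to zero does not destroy $\ze$-semistability. This is where the hypotheses enter crucially. By $\al$-genericity, every $\ze$-semistable representation is in fact $\ze$-stable, so the defining inequalities on subrepresentations are strict; and the condition $\al_i=1$ for some vertex $i$ (a framing vertex) severely restricts the possible dimension vectors of subrepresentations, in particular forcing $\be_i\in\{0,1\}$ for any proper subrepresentation. These two features together allow one to show that the strict stability inequalities persist in the $t\to 0$ limit, so that the limiting representation remains in $R_\ze(Q,\al)$. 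Once this is verified, Theorem \ref{thr:bbs} delivers the displayed identity and the lemma follows.
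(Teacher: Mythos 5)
Your reduction of the identity to the two hypotheses of Theorem \ref{thr:bbs} for $f_{\ze,\al}$ on $R_\ze(Q,\al)$ is fine as bookkeeping, but the key step — that the limit $\lim_{t\to 0}tx$ of a $\ze$-stable point under the cut-weight $\cC^*$-action stays in the semistable locus — is not proved, and it is in fact false in general, so the approach as stated does not go through. Scaling the cut arrows to zero typically destroys stability: for the framed conifold quiver with cut $I=\set{a_1}$ in the NCDT chamber, stability of $(V,\wtl V_\infty,s)$ means $V$ is generated by $\im(s)$ over $J$, and a stable point whose generation of $V_1$ uses only the arrow $a_1$ becomes non-cyclic, hence unstable, at $a_1=0$; so the limit exists in $R(Q,\al)$ but not in the open subset $R_\ze(Q,\al)$, and Theorem \ref{thr:bbs} cannot be applied there. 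Neither $\al$-genericity nor the condition $\al_i=1$ repairs this: genericity only converts semistability into stability, and $\al_i=1$ does not prevent the degenerate limit from admitting destabilizing sub- or quotient objects.

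The paper's proof avoids exactly this problem by descending rather than restricting. Using $\al$-genericity, $R_\ze(Q,\al)\to M_\ze(Q,\al)=R_\ze(Q,\al)/\GG_\al$ is a principal $PG_\al$-bundle, and the hypothesis $\al_i=1$ is used precisely so that $PG_\al=\GG_\al/\cC^*$ is a product of general linear groups, hence special, making the bundle Zariski locally trivial; this is what lets one trade $[\crit(f_{\ze,\al})]_\vir/[\GG_\al]_\vir$ for $[\crit(f'_{\ze,\al})]_\vir/[\GL_1]_\vir$ on the smooth quasi-projective variety $M_\ze(Q,\al)$ (so your reading of the role of $\al_i=1$ as constraining dimension vectors of subrepresentations is not the one that is needed). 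Theorem \ref{thr:bbs} is then applied to the induced function $f'_{\ze,\al}$ on $M_\ze(Q,\al)$, where limits of the $\cC^*$-action exist not because naive limits upstairs stay semistable, but because $M_\ze(Q,\al)$ is projective over the affine quotient $R(Q,\al)/\!\!/\GG_\al$, so the valuative criterion of properness supplies the limit point (generally corresponding to a different orbit than the naive degeneration). If you want to salvage your outline, you must replace the unjustified persistence-of-stability claim by this descent-plus-properness argument, or supply some other mechanism for the existence of limits; as written, the proposal has a genuine gap at its central step.
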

\begin{proof}
Let
$$M_\ze(Q,\al)=R_\ze(Q,\al)/\GG_\al$$
be the smooth moduli space of \ze-semistable $Q$-representations having dimension vector \al, and let $f'_{\ze,\al}:M_\ze(Q,\al)\to\cC$ be the map induced by $f_{\ze,\al}:R_\ze(Q,\al)\to\cC$. Note that $R_\ze(Q,\al)\to M_\ze(Q,\al)$ is a principal bundle with the structure group $PG_\al=G_\al/\cC^*$. The group $PG_\al$ is a product of general linear groups (here we use our assumption that there exists $i\in Q_0$ with $\al_i=1$). Therefore $R_\ze(Q,\al)\to M_\ze(Q,\al)$ is locally trivial in Zariski topology. This implies
$$\frac{[\crit(f_{\ze_,\al})]_\vir}{[\GG_\al]_\vir}
=\frac{[\crit(f'_{\ze,\al})]_\vir}{[\GL_1]_\vir}.$$
As $(Q,W)$ admits a cut, the space $M_\ze(Q,\al)$ admits a $\cC^*$-action satisfying the conditions of Theorem \ref{thr:bbs} for the function $f'_{\ze,\al}:M_\ze(Q,\al)\to\cC$ (one uses the fact that $M_\ze(Q,\al)$ is projective over $R(Q,\al)/\!\!/\GG_\al$).
This implies
\begin{multline}
\frac{[\crit(f'_{\ze,\al})]_\vir}{[\GL_1]_\vir}
=\frac{-(-\cL^\oh)^{-\dim M_\ze(Q,\al)}[\vi_{f'_{\ze,\al}}]}{(-\cL^\oh)\inv(\cL-1)}\\
=\frac{(-\cL^\oh)^{\dim \GG_\al-\dim R(Q,\al)}}{\cL-1}([f'^{-1}_{\ze,\al}(0)]-[f'^{-1}_{\ze,\al}(1)])\\
=(-\cL^\oh)^{\hi(\al,\al)}\frac{[f^{-1}_{\ze,\al}(0)]-[f^{-1}_{\ze,\al}(1)]}{[\GG_\al]}.
\end{multline}
\end{proof}

\subsection{Twisted algebra and central charge}

\begin{defn}
\label{defn_mt}
The {\it twisted motivic algebra} associated to the quiver $Q$ is the associative $\tildeMC$-algebra 
\[
\That_Q = \prod_{\al\in \cN^{Q_0}} \wtl{\lM}_\cC\cdot y^\al\]
generated by formal variables $y^\al$ that satisfy the relation
$$y^{\al}\cdot y^{\be}=(-\cL^\oh)^{\ang{\al,\be}}y^{\al+\be},$$
with $\langle\bullet,\bullet \rangle$ the skew-symmetric form of the quiver $Q$.
\end{defn}

Note that if the quiver $Q$ is symmetric, i.e.~its skew-symmetric form is identically zero, then $\That_Q$ is commutative. 

\begin{rk}
This algebra, which is all we are going to need, is (a completion of) the ``positive half'' of the motivic quantum torus of Kontsevich--Soibelman~\cite{kontsevich_stability}. 
\end{rk}

A ray in the upper half plane $\cH=\cH_+\cup\set0$ is a half line which has the origin as its end.
For a ray $l\subset\cH$ and a central charge $Z$, we put
\[
\That_{Z,l} =
\prod_{\al\in Z^{-1}(l)\cap \cN^{Q_0}} \wtl{\lM}_\cC\cdot y^\al,
\]
a subalgebra of the twisted algebra $\That_Q$. 

\begin{lem}[Kontsevich--Soibelman~{\cite[Theorem 6]{kontsevich_affine}}]
For any element $A=\sum_{\al\in\cN^{Q_0}}A_\al y^\al\in \That_Q$ with $A_0=1$, there is a unique factorization
\begin{equation}\label{eq_FP}
A
=\prod_{l\subset\cH}^{\curvearrowright}A_{Z,l}
\end{equation}
with $A_{Z,l}\in \That_{Z,l}$, where the product is taken in the clockwise order over all rays.
\label{factorlemma}
\end{lem}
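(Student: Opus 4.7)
The plan is to build the factors $A_{Z,l}$ by induction on the total degree $\n\al = \sum_{i \in Q_0} \al_i$, using the natural decreasing filtration $\That_Q^{\ge n}$ spanned by $\{y^\al : \n\al \ge n\}$. Two structural facts underpin the argument: first, $Z$ is a group homomorphism, so $Z(\al), Z(\be) \in l$ forces $Z(\al+\be) \in l$, hence each $\That_{Z,l}$ is a subalgebra of $\That_Q$; second, the filtration is compatible with the twisted product, and $\That_Q$ is already complete with respect to it. Only finitely many $\al$ satisfy $\n\al \le n$, so at each truncation level only finitely many rays contribute non-trivially, and the ordered product $\prod_l^{\curvearrowright} A_{Z,l}$ with $A_{Z,l} \in 1 + \That_{Z,l} \cap \That_Q^{\ge 1}$ makes unambiguous sense in the completion.

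For the inductive step, suppose factors $A_{Z,l}^{(n-1)}$ have been chosen so that $A \equiv \prod_l^{\curvearrowright} A_{Z,l}^{(n-1)} \pmod{\That_Q^{\ge n}}$. Let $D_n$ denote the degree-$n$ part of $A - \prod_l^{\curvearrowright} A_{Z,l}^{(n-1)}$. Since every $\al$ with $\n\al = n$ lies on a \emph{unique} ray $l(\al)$, there is a canonical decomposition $D_n = \sum_l D_{n,l}$ with each $D_{n,l}$ supported on the ray $l$. Setting $A_{Z,l}^{(n)} = A_{Z,l}^{(n-1)} + D_{n,l}$ gives
\[
\prod_l^{\curvearrowright} A_{Z,l}^{(n)} \equiv \prod_l^{\curvearrowright} A_{Z,l}^{(n-1)} + \sum_l D_{n,l} \equiv A \pmod{\That_Q^{\ge n+1}},
\]
because any cross-term pairing a degree-$n$ correction $D_{n,l}$ with a non-unit contribution from another factor lands in $\That_Q^{\ge n+1}$. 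Uniqueness at this step is immediate: any two valid refinements differ by ray-supported elements whose sum vanishes in degree $n$, forcing each to vanish because the ray supports are pairwise disjoint. Passing to the inverse limit as $n \to \infty$ yields both existence and uniqueness of the factorization.

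The proof is essentially a standard Zassenhaus-style completion argument, so no step is particularly hard. The only subtlety worth flagging is the interpretation of a product indexed by the uncountable set of rays in $\cH$, which is resolved by the observation that only finitely many factors are non-trivial at any finite truncation level.
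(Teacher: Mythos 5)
Your proof is correct, and it takes a genuinely different (though equally standard) route from the paper's. The paper truncates by the size $\n\al<r$ of dimension vectors and then peels off rays one at a time in order of increasing argument of $Z(l)$: first $A_{Z,l_1}^{(r)}$ is read off as the $l_1$-component of $A^{(r)}$, then $A_{Z,l_2}^{(r)}$ as the $l_2$-component of $A^{(r)}\cdot(A_{Z,l_1}^{(r)})^{-1}$, and so on. The structural input there is additivity of $Z$ together with convexity of arguments: if $Z(\al)$ and $Z(\be)$ both lie in the closed angular sector strictly above $l_i$, so does $Z(\al+\be)$, hence cross-terms coming from the not-yet-processed rays cannot land on $l_i$. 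Your argument replaces this ray-by-ray elimination with a degree-by-degree correction: at degree $n$, cross-terms between corrections are automatically pushed to degree $\ge n+1$ because the corrections have degree exactly $n$ and every factor starts with $1$, so the degree-$n$ discrepancy $D_n$ can be split canonically by ray support and absorbed into the factors simultaneously. Both are clean ``Zassenhaus-style'' completion arguments; your version treats all rays symmetrically and makes the role of the $\n\al$-adic completion transparent, while the paper's version makes explicit the interaction between the clockwise ordering and the geometry of $Z$. One small point worth making explicit in your write-up: the statement only has a chance of uniqueness if one normalizes each $A_{Z,l}$ to have constant term $1$ (your base case $A_{Z,l}^{(0)}=1$), which is also implicitly assumed in the paper.
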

\begin{proof}
For a positive real number $r$, we put
\[
\That^{(r)}_Q=
\prod_{\al\in \cN^{Q_0}, |\al|<r} \wtl{\lM}_\cC\cdot y^\al
,\quad
\That_{Z,l}^{(r)} =
\prod_{\al\in Z^{-1}(l)\cap \cN^{Q_0}, |\al|<r} \wtl{\lM}_\cC\cdot y^\al
\]
which we consider as factor algebras of $\That_Q$ and $\That_{Z,l}$ respectively.
Let $A^{(r)}\in \That^{(r)}_Q$ denote the image of $A$
under the canonical projection $\That_{Q}\twoheadrightarrow\That^{(r)}_Q$.
It is enough to show that for any $r$ there is a unique factorization
\[
A^{(r)}
=\prod_{l\subset\cH}^{\curvearrowright}A_{Z,l}^{(r)}
\]
with $A_{Z,l}^{(r)}\in \That_{Z,l}^{(r)}$.
Note that the set of rays $l\in\cH$ such that
$$\sets{\al\in Z^{-1}(l)}{\n\al<r}\neq \emptyset$$
is finite. We order this set $(l_1,\ldots,l_N)$ so that
$$\arg Z(l_1)<\cdots<\arg Z(l_N).$$
First we put $A_{Z,l_1}^{(r)}$ to be the summand of $A^{(r)}$ contained in
$\That_{Z,l_1}^{(r)}$.
For $1<i\leq N$ we define $A_{Z,l_{i}}^{(r)}$ to be the summand 
of
\[
A^{(r)}\cdot \bigl(A_{Z,l_1}^{(r)}\bigr)^{-1}\cdot \cdots \cdot
\bigl(A_{Z,l_{i-1}}^{(r)}\bigr)^{-1}
\]
contained in
$\That_{Z,l_i}^{(r)}$.
Uniqueness is clear from the construction.
\end{proof}

\subsection{Generating series of motivic DT invariants}
Let $(Q,W)$ be a quiver with a potential admitting a cut, and let $J=J_{Q,W}$ be its Jacobian algebra.

\begin{defn}
\label{def:series}
We define the generating series of the motivic Donaldson-Thomas invariants of $(Q,W)$ by
$$A_U
=\sum_{\al\in\cN^{Q_0}}[\gM(J,\al)]_\vir \cdot y^\al
=\sum_{\al\in\cN^{Q_0}}
\frac{[\crit(f_\al)]_\vir}{[\GG_\al]_\vir}\cdot y^\al\in\That_Q,$$
the subscript referring to the fact that we think of this series as the universal series.
\end{defn}

Given a cut $I$ of $(Q,W)$, we define a new quiver $Q_I=(Q_0,Q_1\ms I)$. 
Let $J_{W,I}$ be the quotient of $\cC Q_I$ by the ideal $$(\dd_I W)=(\dd W/\dd a,a\in I).$$

\begin{prop}
\label{first reduction}
If $(Q,W)$ admits a cut $I$, then
$$A_U=\sum_{\al\in\cN^{Q_0}}(-\cL^\oh)^{\hi(\al,\al)+2d_I(\al)}\frac{[R(J_{W,I},\al)]}{[\GG_\al]}y^\al,$$
where $d_I(\al)=\sum_{(a:i\to j)\in I}\al_i\al_j$ for any $\al\in\cZ^{Q_0}$.
\end{prop}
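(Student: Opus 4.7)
\medskip

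The plan is to start from the formula
\begin{equation*}
[\gM(J,\al)]_\vir=(-\cL^\oh)^{\hi(\al,\al)}\frac{[f_\al\inv(0)]-[f_\al\inv(1)]}{[\GG_\al]}
\end{equation*}
proved in \eqref{eq:bbs1} (which uses Theorem \ref{thr:bbs} applied to the $\cC^*$-action on $R(Q,\al)$ induced by the cut grading $g_I$). It is therefore enough to show the identity
\begin{equation*}
[f_\al\inv(0)]-[f_\al\inv(1)]=\cL^{d_I(\al)}\cdot[R(J_{W,I},\al)],
\end{equation*}
because $\cL^{d_I(\al)}=(-\cL^\oh)^{2d_I(\al)}$ and the claim will follow.

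To get this identity, I would use the splitting
$$R(Q,\al)=R(Q_I,\al)\oplus R(I,\al),$$
whose second summand has dimension exactly $d_I(\al)$. Since the cut grading puts every arrow of $I$ in degree $1$ and every arrow of $Q_I$ in degree $0$, and $W$ is homogeneous of degree $1$, each cyclic monomial in $W$ contains exactly one arrow from $I$. Writing a point of $R(Q,\al)$ as $(x,y)$ with $x\in R(Q_I,\al)$ and $y=(y_a)_{a\in I}\in R(I,\al)$, this means that
$$f_\al(x,y)=\sum_{a\in I}\operatorname{tr}\bigl(\partial_a W(x)\cdot y_a\bigr),$$
where $\partial_a W$ depends only on $x$ (arrows in $Q_I$). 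Thus for each fixed $x$ the restriction of $f_\al$ to the fibre over $x$ is an affine-linear function of $y$, and by definition it vanishes identically in $y$ precisely when $x\in R(J_{W,I},\al)$.

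The computation of $[f_\al\inv(c)]$ then proceeds by projecting to $R(Q_I,\al)$ and analyzing fibres: over $x\in R(J_{W,I},\al)$ the fibre of $f_\al\inv(0)$ is all of $R(I,\al)$ (motive $\cL^{d_I(\al)}$) and the fibre of $f_\al\inv(1)$ is empty, while over $x\notin R(J_{W,I},\al)$ both $f_\al\inv(0)$ and $f_\al\inv(1)$ restrict to parallel affine hyperplanes in $R(I,\al)$, each of motive $\cL^{d_I(\al)-1}$. Taking the difference, the contributions over $R(Q_I,\al)\setminus R(J_{W,I},\al)$ cancel and one is left with $\cL^{d_I(\al)}\cdot[R(J_{W,I},\al)]$, as desired.

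The argument is essentially bookkeeping: the only point that requires care is the linearity of $f_\al$ in the $I$-directions, which is why the cut hypothesis is used in this precise form. Once that is noted, the fibrewise motivic computation is immediate and no step presents a genuine obstacle.
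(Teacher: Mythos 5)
Your argument is correct, and its skeleton is the same as the paper's: both start from \eqref{eq:bbs1} and reduce everything to the identity $[f_\al\inv(0)]-[f_\al\inv(1)]=\cL^{d_I(\al)}\,[R(J_{W,I},\al)]$, after which the sign bookkeeping $(-\cL^\oh)^{2d_I(\al)}=\cL^{d_I(\al)}$ is immediate. The difference is how that identity is treated: the paper simply quotes it from \cite{nagao_wall-crossing} and \cite{morrison_motivic}, whereas you prove it directly, splitting $R(Q,\al)=R(Q_I,\al)\oplus R(I,\al)$ and using that the cut makes each cycle of $W$ contain exactly one arrow of $I$, so that $f_\al(x,y)=\sum_{a\in I}\operatorname{tr}\bigl(y_a\,(\dd W/\dd a)(x)\bigr)$ is linear in the $I$-coordinates; your identification of the locus where this linear form vanishes identically with $R(J_{W,I},\al)$ (via nondegeneracy of the trace pairing) and the fibrewise cancellation over the complement is exactly the content of the cited results, so your write-up is self-contained where the paper's is not. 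The only point you gloss over is the passage from fibrewise counts to a motivic identity: over the stratum $x\notin R(J_{W,I},\al)$ you should observe that $f_\al\inv(0)$ is the kernel of a surjective morphism of vector bundles, hence a Zariski-locally trivial bundle of rank $d_I(\al)-1$, and that $f_\al\inv(1)$ is a torsor under it, also Zariski-locally trivial, so both contribute $\cL^{d_I(\al)-1}$ times the class of the stratum and cancel in the difference. This is routine, but it is the step that upgrades the pointwise picture to an equality in $\MC$, and it deserves a sentence.
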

\begin{proof}
Let $f=f_\al:R(Q,\al)\to\cC$.
According to \eqref{eq:bbs1} we have
$$[\gM(J,\al)]_\vir=(-\cL^\oh)^{\hi(\al,\al)}\frac{[f\inv(0)]-[f\inv(1)]}{\GG_\al}.$$
It is proved in \cite[Theorem 4.1]{nagao_wall-crossing} and \cite[Prop.~7.1]{morrison_motivic} that
$$[f\inv(1)]-[f\inv(0)]=-\cL^{d_I(\al)}[R(J_{W,I},\al)].$$
Therefore
$$
[\gM(J,\al)]_\vir
=(-\cL^\oh)^{\hi(\al,\al)+2d_I(\al)}\frac{[R(J_{W,I},\al)]}{[\GG_\al]}.$$
\end{proof}

Let $\ze\in\cR^{Q_0}$ be some stability parameter and let $Z:\cZ^{Q_0}\to\cC$ be the central charge determined by \ze as in Definition \ref{Z from ze}.



\begin{defn}
\label{A_{Z,l}}
Let $l=\cR_{\ge0}(-\mu,1)\sb\cH$ be a ray (see Remark \ref{rmr:mu}).
We put
$$A_{Z,l}=A_{\ze,\mu}=
\sum_{\substack{\al\in \cN^{Q_0} \\ Z(\al)\in l}}
[\gM_\ze(J,\al)]_\vir\cdot y^\al
\in \That_Q.$$
\end{defn}
The Harder-Narashimhan filtrations provide a filtration on $R(Q,\al)$.
This filtration induces the following {\it factorization property}.

\begin{thm}
\label{thm_FP} 
Assume that $(Q,W)$ has a cut. Then we have
\[
A_U=\prod_{l}^{\curvearrowright}A_{Z,l},
\]
where the product is taken in the clockwise order over all rays.
\end{thm}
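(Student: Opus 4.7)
The plan is to apply the Harder--Narasimhan (HN) stratification argument of Reineke, extended to the motivic vanishing-cycle setting as in \cite{kontsevich_stability, mozgovoy_motivica, nagao_wall-crossing}. Every $J$-module admits a unique HN filtration with respect to $\ze$, and this induces a stratification of $R(Q,\al)$ (compatible with the critical locus of $f_\al$) into locally closed $\GG_\al$-invariant strata $R^{HN}_{\al_\bullet}(Q,\al)$ indexed by HN types $\al_\bullet = (\al_1,\dots,\al_s)$, i.e.\ ordered decompositions $\sum_i\al_i = \al$ with $\vi(\al_1)>\cdots>\vi(\al_s)$. The goal is to show that the contribution of each such stratum to $[\gM(J,\al)]_\vir$ equals exactly the per-stratum contribution to the $y^\al$-coefficient of $\prod_l^{\curvearrowright} A_{Z,l}$ arising from the twisted multiplication in $\That_Q$.

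The key computation realises each HN stratum as a homogeneous bundle $\GG_\al \xx_{P_{\al_\bullet}} R^{fl,ss}_{\al_\bullet}$, where $P_{\al_\bullet}\sb\GG_\al$ is the parabolic preserving the flag of type $\al_\bullet$ and $R^{fl,ss}_{\al_\bullet}$ is the subvariety of flag-preserving representations whose associated graded is componentwise $\ze$-semistable. Taking associated graded gives a Zariski-locally trivial affine bundle $R^{fl,ss}_{\al_\bullet}\to\prod_i R_\ze(Q,\al_i)$, whose rank together with the Levi decomposition $P_{\al_\bullet} = U_{\al_\bullet}\rtimes\prod_i\GG_{\al_i}$ produces, after a direct dimension count, the identity
\[
\frac{[R^{HN}_{\al_\bullet}(Q,\al)]}{[\GG_\al]} = \cL^{-\sum_{i<j}\hi(\al_j,\al_i)}\prod_i\frac{[R_\ze(Q,\al_i)]}{[\GG_{\al_i}]}
\]
in $\tildeMC$, with $\hi$ the Euler--Ringel form of $Q$.

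To lift this to virtual motives I would propagate the identity through Proposition~\ref{first reduction}: rewrite both sides using the cut $I$ in terms of $[R(J_{W,I},\cdot)]/[\GG_\cdot]$, apply the same HN bundle analysis now to $R(J_{W,I},\al)\sb R(Q_I,\al)$, and match exponents using the elementary quadratic identity
\[
\hi(\al,\al) = \sum_i \hi(\al_i,\al_i) + \sum_{i<j}\bigl[\hi(\al_i,\al_j)+\hi(\al_j,\al_i)\bigr]
\]
together with $\ang{\al_i,\al_j} = \hi(\al_i,\al_j)-\hi(\al_j,\al_i)$ and the bilinear additivity of $d_I$. The combined exponent of $(-\cL^\oh)$ then collapses to $\sum_i\hi(\al_i,\al_i) + \sum_{i<j}\ang{\al_i,\al_j}$, so that the per-stratum contribution becomes
\[
\frac{[\crit(f_\al)\cap R^{HN}_{\al_\bullet}]_\vir}{[\GG_\al]_\vir} = (-\cL^\oh)^{\sum_{i<j}\ang{\al_i,\al_j}}\prod_i [\gM_\ze(J,\al_i)]_\vir,
\]
which is exactly the contribution of $\al_\bullet$ to the coefficient of $y^\al$ in $\prod_l^{\curvearrowright}A_{Z,l}$ under the twisted product rule in $\That_Q$. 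Summing over HN types of $\al$ and then over $\al$ gives the theorem.

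The main technical obstacle is verifying that the affine bundle structure on $R^{fl,ss}_{\al_\bullet}$ is compatible with the Jacobian relations $\dd_I W=0$ cutting out $R(J_{W,I},\al)$. Because the cut makes $W$ homogeneous of cut-degree one, the relations $\dd_I W$ are polynomial in the non-$I$ arrows and their block decomposition along the flag is triangular, so the bundle restricts cleanly to $R(J_{W,I},\al)$; this parallels the arguments in \cite{nagao_wall-crossing, morrison_motivic} that are already used in Proposition~\ref{first reduction}. A secondary check is that the principal bundles involved are Zariski- (not merely étale-) locally trivial, which holds because their structure groups are successive extensions of general linear groups and hence special. Once these are in place, the exponent bookkeeping described above finishes the proof.
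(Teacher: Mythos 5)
Your overall skeleton --- HN stratification, the Reineke-style identification of each stratum as $\GG_\al\xx_{P_{\al_\bullet}}R^{fl,ss}_{\al_\bullet}$, and the exponent bookkeeping converting $\cL^{-\sum_{i<j}\hi(\al_j,\al_i)}$ together with $(-\cL^{\oh})^{\hi(\al,\al)}$ into the twist $(-\cL^{\oh})^{\sum_{i<j}\ang{\al_i,\al_j}}$ --- is the right one, and it is essentially the argument of the references the paper itself points to (the paper gives no proof, only citing \cite{kontsevich_stability} and the simplified cut-based proofs in \cite{nagao_wall-crossing,mozgovoy_motivica}). But the middle step of your plan has a genuine gap: you propose to ``rewrite both sides using the cut'' and run the HN analysis on $R(J_{W,I},\al)$. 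The coefficients of $A_{Z,l}$ are defined by \eqref{eq:bbs2} through $f_{\ze,\al}$ on the semistable locus $R_\ze(Q,\al)$ of the \emph{full} quiver, and there is no analogue of Proposition \ref{first reduction} for them: $\ze$-semistability and HN filtrations are conditions on $J$-modules (all arrows, including the cut ones), and they are neither defined on, nor detected by, the reduced space $R(J_{W,I},\al)$ --- a subspace preserved by the non-cut arrows need not be preserved by the cut arrows. Moreover the dimensional-reduction mechanism itself breaks on the semistable locus: the forgetful map $R_\ze(Q,\al)\to R(Q_I,\al)$ has as fibers proper open cones in affine spaces rather than full affine spaces, and the $\cC^*$-limits (cut arrows $\to 0$) may leave the semistable locus, so neither Theorem \ref{thr:bbs} nor the fiberwise linear-form computation applies there. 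Relatedly, your per-stratum identity is stated for $[\crit(f_\al)\cap R^{HN}_{\al_\bullet}]_\vir$, which is not a defined object: vanishing-cycle classes and virtual motives are not known to restrict to, or be additive over, locally closed strata --- circumventing exactly this is the point of the cut hypothesis.

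The repair, which is what the cited simplified proofs do: use the cut only once, on all of $R(Q,\al)$, to express $[\gM(J,\al)]_\vir$ through $[f_\al^{-1}(0)]-[f_\al^{-1}(1)]$ as in \eqref{eq:bbs1}; then stratify the honest varieties $f_\al^{-1}(0)$ and $f_\al^{-1}(1)$ by HN type. On a stratum every arrow preserves the HN flag, so $\operatorname{Tr}W$ depends only on the diagonal blocks and the passage to the associated graded is an affine fibration onto $\prod_i R_\ze(Q,\al_i)$ along which $f_\al$ is pulled back from $\boxplus_i f_{\ze,\al_i}$. The lemma missing from your write-up is the multiplicativity of $c\mapsto [f^{-1}(0)]-[f^{-1}(1)]$ under external sums of $\cC^*$-equivariant weight-one functions --- an elementary computation using $[X]=[f^{-1}(0)]+(\cL-1)[f^{-1}(1)]$, which plays the role of motivic Thom--Sebastiani here and is where the circle action is really used. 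With that lemma, each stratum contributes $(-\cL^{\oh})^{\sum_{i<j}\ang{\al_i,\al_j}}\prod_i[\gM_\ze(J,\al_i)]_\vir$ exactly as defined in \eqref{eq:bbs2}, and your exponent bookkeeping (which is correct) finishes the proof; no semistable analogue of Proposition \ref{first reduction} is needed.
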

\begin{proof}
This is originally a result of Kontsevich--Soibelman~\cite{kontsevich_stability}, though their proof 
depends on a conjectural integral identity. Assuming the existence of a cut, Theorem \ref{thr:bbs}
leads to a simplified proof, written out in~\cite{nagao_wall-crossing} and~\cite{mozgovoy_motivica}. 
\end{proof}


\section{The universal DT series of the conifold quiver}

\subsection{Motivic DT invariants for the conifold quiver}
\label{sec:coniquiver}
Let $(Q,W)$ be the conifold quiver with potential. 
Recall that $Q$ has vertices $0,1$ and arrows $a_i:0\to1$, $b_i:1\to0$ for $i=1,2$.
The potential is given by
$$W=a_1b_1a_2b_2-a_1b_2a_2b_1.$$
We have
$$\hi(\al,\al)=\al_0^2+\al_1^2-4\al_0\al_1.$$


Let $J_W=\cC Q/\dd W$ be the Jacobian algebra of $(Q,W)$.
Then $I=\set{a_1}$ is easily seen to be a cut for $(Q,W)$.
Let $Q_I=(Q_0,Q_1\ms I)$ be the quiver defined by the cut, and  $J_{W,I}$ 
the quotient of $\cC Q_I$ by the ideal $$(\dd_I W)=(\dd W/\dd a,a\in I).$$
It follows from Proposition \ref{first reduction} that the coefficients of the universal Donaldson-Thomas series $A_{U}=\sum_{\al\in\cN^{Q_0}} A_\al y^\al$ are given by
$$A_\al
=(-\cL^\oh)^{\hi(\al,\al)+2\al_0\al_1}\frac{[R(J_{W,I},\al)]}{[\GG_\al]}\\
=(-\cL^\oh)^{(\al_0-\al_1)^2}\frac{[R(J_{W,I},\al)]}{[\GG_\al]}.$$

The goal of this section is to prove the following result.

\begin{thm}
\label{thm main universal result}
We have
\begin{equation} 
A_{U}(y_0, y_1) =\displaystyle\Exp\bigg(\frac{(\cL+\cL^2)y_0y_1-\cL^\oh(y_0+y_1)}{\cL-1}\sum_{n\ge0}(y_0y_1)^n\bigg).
\label{eq univ exp form}
\end{equation}
Equivalently,
\begin{equation}
A_{U}(y_0, y_1) =\displaystyle\prod_{\alpha\in\Delta_+}A^\alpha(y_0, y_1),\label{eq univ prod form}
\end{equation}\
where for roots $\alpha\in\Delta_+$, we put
\[
A^\alpha(y_0,y_1)=
\begin{cases}
\Exp\bigg(\displaystyle\frac{-\cL^{-\oh}}{1-\cL\inv}y^\al\bigg)=
\displaystyle\prod_{j\ge0}
\left(1-\cL^{-j-\oh}y^\al\right) &\alpha\in\De_+^\re,\\
\Exp\left(\displaystyle\frac{1+\cL}{1-\cL\inv}y^\al\right)=
\displaystyle\prod_{j\ge0}
\left(1-\cL^{-j}y^\al\right)^{-1}
\left(1-\cL^{-j+1}y^\al\right)^{-1} &\alpha\in\De_+^\im.\\
\end{cases}
\]
\end{thm}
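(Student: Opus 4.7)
My plan is to first invoke Proposition~\ref{first reduction} with the cut $I = \{a_1\}$ to trade the vanishing cycle on $R(Q,\al)$ for an ordinary motive of the representation variety of the cut algebra. Since for the conifold $\hi(\al,\al) + 2d_I(\al) = (\al_0-\al_1)^2$, this gives
\[
A_U = \sum_{\al\in\cN^2}(-\cL^\oh)^{(\al_0-\al_1)^2}\frac{[R(J_{W,I},\al)]}{[\GG_\al]}y^\al,
\]
where $R(J_{W,I},\al)$ parametrizes triples $(A, B_1, B_2)$ with $A\in\Hom(V_0, V_1)$ and $B_i\in\Hom(V_1, V_0)$, cut out by the single quadratic matrix equation $B_1 A B_2 = B_2 A B_1$ coming from $\dd W/\dd a_1$.

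The heart of the proof is thus the motivic evaluation of $R(J_{W,I},\al)$. My first approach would be a direct matrix computation: stratify by the rank $r$ of $A$ and the induced splittings $V_0 = \ker A \oplus V_0'$, $V_1 = \im A \oplus V_1'$. Over the full-rank locus (when $\al_0=\al_1$) the equation literally becomes $[AB_1, AB_2]=0$, so the contribution is a commuting variety of $\al_0\xx\al_0$ matrices, whose Feit--Fine-type motivic series is classical. For lower rank, the kernel and cokernel contribute free affine factors while the rank-$r$ block again yields a commuting-matrix problem, and summing the Grassmannian-weighted contributions should produce the full motive. An equally natural alternative, hinted at in the introduction, is a \emph{second} dimensional reduction applied to $J_{W,I}$: the expression $B_1AB_2 - B_2AB_1$ is linear in $A$, so projecting to $(B_1,B_2)$ displays $R(J_{W,I},\al)$ as an affine-space bundle—up to a Lefschetz correction tracking the rank of the linear map $A\mto B_1AB_2 - B_2AB_1$—over a representation variety of a tame quiver whose generating series are known.

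The expected principal obstacle is the subsequent bookkeeping: extracting $[R(J_{W,I},\al)]$ in closed form and recognising the resulting bigraded series as the claimed $\Exp$. Working in the $\la$-ring $\tildeMC$, I would verify the identity by applying $\Log$ and matching Adams-operation components, grouping contributions by the three monomials of the numerator. The $-\cL^\oh y_0$ and $-\cL^\oh y_1$ terms, once multiplied by $(\cL-1)^{-1}\sum_n(y_0y_1)^n$, should account precisely for the real-root factors $A^\al$ with $\al=(i,i-1)$ and $\al=(i-1,i)$, while $(\cL+\cL^2)y_0y_1$ produces the imaginary-root factors $A^{(i,i)}$ since $(1+\cL)/(1-\cL\inv) = (\cL+\cL^2)/(\cL-1)$.

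Finally, the equivalence between \eqref{eq univ exp form} and \eqref{eq univ prod form} is a purely formal $\la$-ring manipulation: expanding $1/(1-\cL^{-1}) = \sum_{j\ge0}\cL^{-j}$ in the exponent and using $\Exp(f+g)=\Exp(f)\Exp(g)$ together with $\Exp(-\cL^k y^\al)=1-\cL^k y^\al$ produces the individual products over $\De_+^\re$ and $\De_+^\im$ term by term.
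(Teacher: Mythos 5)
Your use of Proposition~\ref{first reduction} with the cut $I=\{a_1\}$, and the final \la-ring manipulations matching the three numerator terms to the real and imaginary root factors, are correct and agree with the paper. The gap is in the central step, the evaluation of $[R(J_{W,I},\al)]$. Your primary route --- stratifying by the rank $r$ of $A$ alone --- fails as described. Choosing splittings $V_0=V_0'\oplus\ker A$, $V_1=\operatorname{im}A\oplus V_1'$, using $A$ to identify $V_0'\cong\operatorname{im}A$, and writing $B_i=\begin{pmatrix}P_i&Q_i\\R_i&S_i\end{pmatrix}$, the equation $B_1AB_2=B_2AB_1$ becomes the four conditions $P_1P_2=P_2P_1$, $P_1Q_2=P_2Q_1$, $R_1P_2=R_2P_1$, $R_1Q_2=R_2Q_1$. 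Only the blocks $S_i$ are free; the off-diagonal blocks $Q_i,R_i$ satisfy coupled bilinear equations (already nontrivial for $\al_0=\al_1=2$, $r=1$), so the stratum is \emph{not} a commuting variety times an affine space, and the ``Grassmannian-weighted'' sum you propose does not compute the motive. This is exactly why the paper's first proof stratifies instead by the Fitting decomposition of the single endomorphism $A_2\oplus B_2$ of $V_0\oplus V_1$ into invertible and nilpotent parts (the Feit--Fine device): the invertible part does give a commuting variety, but the nilpotent stratum still requires a genuine normal-form analysis indexed by four partitions, with the fiber of $B_1$'s a linear space whose dimension must be computed --- nothing there is ``free affine factors''.

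Your alternative route is viable and is essentially the paper's second proof with the roles of the linear variables exchanged: the paper forgets $B_1$ (the equation being linear in $B_1$), so the base is the cyclic affine $A_1$ quiver $C^2$ and the fiber dimension is $\dim\Hom_{C^1}(M^1,M^0)$; it then needs the classification of indecomposables, explicit tables of $h$ and $\rho$ for all pairs, Hua's formula, the power structure over $\P^1$ (to handle the $\P^1$-family of regular modules), and Heine's identity to carry out the weighted sum over isomorphism classes. In your version, projecting to $(B_1,B_2)$ puts you over the Kronecker quiver, and the analogous step --- computing the corank of $A\mapsto B_1AB_2-B_2AB_1$ as a function of the isomorphism type of the Kronecker module, and then summing $\cL^{\text{fiber dimension}}/[\Aut]$ over the stack of such modules --- is precisely the heart of the argument. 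It is not supplied by ``generating series of a tame quiver are known'': the unweighted stack series would only give $\Exp$ of something supported on the roots, whereas the module-dependent exponent is what produces the specific numerator $(\cL+\cL^2)y_0y_1-\cL^\oh(y_0+y_1)$. As written, the proposal defers exactly this computation, so it is an outline of the correct strategy rather than a proof.
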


The equivalence of the exponential and product forms \eqref{eq univ exp form}--\eqref{eq univ prod form}
follows from formal manipulations. 
In the following two subsections, we give two proofs of Theorem~\ref{thm main universal result}. 
The first one develops the method of~\cite{feit_pairs} (c.f.~\cite{bryan_motivic}). The second proof uses another ``dimensional reduction'' 
to reduce the problem to that of representations of the tame quiver of affine type $A_1^{(1)}$.

\subsection{First proof}
The goal is to compute the generating function of motives of moduli of representations of $J_{W,I}$-modules. Up to a group action, these moduli spaces are given concretely as spaces of triples of matrices:
\[ R(J_{W,I},\alpha) =\{ (A_2,B_1,B_2)\in\Hom(V_0,V_1)\times\Hom(V_1,V_0)^{\times 2}\mid B_1A_2B_2 = B_2A_2B_1\}.\]
In this section, for simplicity, we will denote this space by $R(\alpha)$.
The proof begins by reducing the problem to two simpler ones via a stratification of $R(\alpha)$.
For $(A_2,B_1,B_2)\in R(\alpha)$, consider the linear map
\[ A_2\oplus B_2 : V_0\oplus V_1\to V_0\oplus V_1 .\]
For any such endomorphism, the vector space $V= V_0\oplus V_1$ has a second decomposition $V=V^I\oplus V^N$ on which $A_2\oplus B_2$ decomposes into an invertible map and a nilpotent map (c.f.~\cite[Lemma 1]{feit_pairs}). Namely,
\[ A_2^I\oplus B_2^I : V^I\to V^I\textrm{ and } A_2^N\oplus B_2^N : V^N\to V^N .\]
Define $V_i^I$ (resp. $V_i^N$) to be the intersection of $V_i$ and $V^I$ (resp. $V^N$), so that we have decompositions $V_0 = V_0^I\oplus V_0^N$ and $V_1 = V_1^I\oplus V_1^N$, with
\[A_2 = A_2^I\oplus A_2^N\in\Hom(V_0^I,V_1^I)\oplus\Hom(V_0^N,V_1^N),\]
\[B_2 = B_2^I\oplus B_2^N\in\Hom(V_1^I,V_0^I)\oplus\Hom(V_1^N,V_0^N).\]
Notice that $A_2^I,B_2^I$ are invertible, in particular we have 
\[\dim(V_0^I) =\dim(V_1^I) =\oh\dim(V^I).\]
A little bit of linear algebra shows that a matrix $B_1$ satisfying $B_1A_2B_2 = B_2A_2B_1$ has a similar block decomposition with respect to the splitting $V=V^I\oplus V^N$;
\[ B_1 = B_1^I\oplus B_1^N\in\Hom(V_1^I,V_0^I)\oplus\Hom(V_1^N,V_0^N).\]
The relation $B_1A_2B_2 = B_2A_2B_1$ now becomes two independent sets of equations, 
$B_1^IA_2^IB_2^I = B_2^IA_2^IB_1^I$ and $B_1^NA_2^NB_2^N = B_2^NA_2^NB_1^N$. Define
\[R_a^I=\{ (A_2,B_1,B_2)\in R((a,a))\mid A_2\oplus B_2\textrm{ is invertible}\};\]
\[R_{\alpha}^N =\{ (A_2,B_1,B_2)\in R(\alpha)\mid A_2\oplus B_2\textrm{ is nilpotent}\}.\]
Over the stratum of $R(\alpha)$ where $\dim(V^I) = 2a$, we have a Zariski locally trivial fibre bundle
\begin{gather*}
\xymatrix{
R_a^I\times R_{\alpha}^N\ar[r] &\{ (A_2,B_1,B_2)\in R(\alpha)\mid\dim(V_I)=2a\}\ar[d]\\
 & \mathcal{M}(a,\alpha)}
\end{gather*}
where $\mathcal{M}(a,\alpha)$ is the space of direct sum decompositions $V_0\cong V^I_0\oplus V_0^N$, $V_1\cong V^I_1\oplus V_1^N$. Hence stratifying $R(\alpha)$ by $\dim(V^I)$ gives the following relation in the Grothendieck ring of varieties:
\[ [R(\alpha)] =\sum_{a=0}^{\min (\alpha_0,\alpha_1)} [R_a^I]\cdot [R^N_{(\alpha_0-a,\alpha_1-a)}]\cdot 
\frac{[\GL_{\alpha_0}]}{[\GL_a][\GL_{\alpha_0-a}]}\cdot\frac{[\GL_{\alpha_1}]}{[\GL_a][\GL_{\alpha_1-a}]}.\]
We collect the above motives into two generating series
\[ I(y) =\sum_{a\geq 0 }\frac{[R_a^I]}{[\GL_a]^2} y^a\]
and
\[N(y_0,y_1) =\sum_{\alpha\in N^Q} 
\frac{[R^N_{\alpha}]}{[\GL_{\alpha_0}][\GL_{\alpha_1}]}(-\LL^{1/2})^{(\alpha_0-\alpha_1)^2}y_0^{\alpha_0}y_1^{\alpha_1} .\]
Multiplying the above relation by $(-\LL^{1/2})^{(\alpha_0-\alpha_1)^2}t_0^{\alpha_0}t_1^{\alpha_1}$ and summing 
gives an equality of power series
\[ A_{U} = I( y_0 y_1 )\cdot N( y_0,y_1 ).\]
It remains to compute $I(y)$ and $N(y_0,y_1)$.

First consider $I(y)$. If $\pi$ is a partition of $ a $ we will write $\pi\vdash a$, and denote its length $l(\pi)$, and size $|\pi|$. Then we have two spaces
\[
R_a^I = \{ (A_2,B_1,B_2)\in\textrm{Iso}\!\left(V_0^I,V_1^I\right)\times\Hom\!\left( V_1^I,V_0^I\right)\times\textrm{Iso}\!\left(V_1^I,V_0^I\right)\mid B_1A_2B_2 = B_2A_2B_1\}\]
and
\[C_a^I =\{ (C_1,C_2)\in \End (V_1^I) \times\GL (V_1^I)\mid C_2^{-1} C_1 C_2 = C_1\},\]
together with a map $\beta : R^I_a\to C^I_a$ given by
\[
\beta(A_2,B_1,B_2) = (A_2B_1,A_2B_2).
\]
The map $\beta$ is a $\GL (V_0^{I})$-torsor associated to a global gauge fixing, $g\cdot (A_2,B_1,B_2) = (A_2 g^{-1},gB_1,gB_2) $. Since the general linear group is a special group, the map $\beta$ is a locally trivial $\GL (V_0^I)$ bundle in the Zariski topology. The base of the fibration $C^I_a$ is a commuting variety whose motivic class is known~\cite{bryan_motivic} to equal
\[ [\GL_a]\sum_{\pi\vdash a}\LL^{l(\pi)}.\]
Therefore
\begin{eqnarray*}
 I(y) & = &\sum_{a\geq 0 }\frac{[R_a^I]}{[\GL_a]^2}y^a =\sum_{a\geq 0 }\frac{[C_a^I]}{[\GL_a]} y^a =\sum_{\pi}\LL^{l(\pi)}y^{|\pi|}  \\
      & = &\prod_{i=1}^\infty\frac{1}{1-\LL y^i}=\Exp\bigg(\LL\sum_{n\geq 1} y^n\bigg).
\end{eqnarray*}
All that remains is to compute the series $N(y_0,y_1)$. Given now that the matrix $ A_2\oplus B_2 $ is nilpotent, there 
exists a basis of $V^N$, $\{ a^{i_1}_{s_1},b^{i_2}_{s_2},c^{i_3}_{s_3},d^{i_4}_{s_4}\}$, $1\leq i_j 
\leq k_j$, $ 1\leq s_j\leq r_i^j $, such that
\begin{eqnarray*}
V_0^N \ni a^{i_1}_{r^1_i}\overset{A_2}\mapsto a^{i_1}_{r^1_i-1}\overset{B_2}\mapsto a^{i_1}_{r^1_i-2}\overset{A_2}\mapsto\cdots\overset{A_2}\mapsto a^{i_1}_{1}\in V_1^N\setminus 0\textrm{ and } B_2(a^{i_1}_{1}) =0,\\
V_1^N \ni b^{i_2}_{r^2_i}\overset{B_2}\mapsto b^{i_2}_{r^2_i-1}\overset{A_2}\mapsto b^{i_2}_{r^2_i-2}\overset{B_2}\mapsto\cdots\overset{B_2}\mapsto b^{i_2}_{1}\in V_0^N\setminus 0\textrm{ and } A_2(b^{i_2}_{1}) =0,\\
V_0^N \ni c^{i_3}_{r^3_i}\overset{A_2}\mapsto c^{i_3}_{r^3_i-1}\overset{B_2}\mapsto c^{i_3}_{r^3_i-2}\overset{A_2}\mapsto\cdots\overset{B_2}\mapsto c^{i_3}_{1}\in V_0^N\setminus 0\textrm{ and } A_2(c^{i_3}_{1}) =0,\\
V_1^N \ni d^{i_4}_{r^4_i}\overset{B_2}\mapsto d^{i_4}_{r^4_i-1}\overset{A_2}\mapsto d^{i_4}_{r^4_i-2}\overset{B_2}\mapsto\cdots\overset{A_2}\mapsto d^{i_4}_{1}\in V_1^N\setminus 0\textrm{ and } B_2(d^{i_4}_{1}) =0.
\end{eqnarray*}

As the numbers $r^1_{i},r^2_i$ are always even and $r^3_i,r^4_i$ always odd, it is combinatorially convenient to define 
$\hat{r}^1_{i}=r^1_{i}/2 ,\hat{r}^2_{i}=r^2_{i}/2  ,\hat{r}^3_{i}=(r^3_{i}+1)/2 ,\hat{r}^4_{i}=(r^4_{i}+1)/2$. Also after reordering we may assume that $\hat{r}^j_{i}\geq\hat{r}_{i+1}^j$. Up to a choice of the above basis, the matrix $ A_2\oplus B_2 $ is determined by four partitions

\[\pi_1 : |\pi_1| =\hat{r}^1_{1}+\hat{r}^1_{2}+\hat{r}^1_{3}+\ldots +\hat{r}^1_{k_1}\]
\[\pi_2 : |\pi_2| =\hat{r}^2_{1}+\hat{r}^2_{2}+\hat{r}^2_{3}+\ldots +\hat{r}^2_{k_2}\]
\[\pi_3 : |\pi_3| =\hat{r}^3_{1}+\hat{r}^3_{2}+\hat{r}^3_{3}+\ldots +\hat{r}^3_{k_3}\]
\[\pi_4 : |\pi_4| =\hat{r}^4_{1}+\hat{r}^4_{2}+\hat{r}^4_{3}+\ldots +\hat{r}^4_{k_4}\]
with
\[\alpha_0 = |\pi_1|+|\pi_2|+|\pi_3|+|\pi_4|-l(\pi_4)\]
\[\alpha_1 = |\pi_1|+|\pi_2|+|\pi_3|+|\pi_4|-l(\pi_3).\]
With respect to the above basis, denote the normal form of $A_2\oplus B_2$ by $A_2^{\{\pi_j\}}\oplus B_2^{\{\pi_j\}}$. 
The space $R_{\alpha}^N$ can be stratified by this data, giving
\[ [R_{\alpha}^N] =\sum_{\pi_1,\pi_2,\pi_3,\pi_4} [R(\pi_1,\pi_2,\pi_3,\pi_4)],\]
where $R(\pi_1,\pi_2,\pi_3,\pi_4)$ is the stratum of $R_{\alpha}^N$, where $A_2\oplus B_2$ has normal form 
$A_2^{\{\pi_j\}}\oplus B_2^{\{\pi_j\}}$. The space $R(\pi_1,\pi_2,\pi_3,\pi_4)$ is a vector bundle
\[ p: R(\pi_1,\pi_2,\pi_3,\pi_4)\to \{(A_2,B_2)\mid A_2\oplus B_2\sim A_2^{\{\pi_j\}}\oplus B_2^{\{\pi_j\}}\}\]
over the space of all matrices with this normal form, with fibre the linear space of matrices
\[\{B_1\mid B_1 A_2^{\{\pi_j\}} B_2^{\{\pi_j\}} = B_2^{\{\pi_j\}} A_2^{\{\pi_j\}} B_1\}.\]
We compute the fibre and base by a linear algebra calculation to deduce
\[ [R(\pi_1,\pi_2,\pi_3,\pi_4)] = [\GL_{\alpha_0}]\cdot [\GL_{\alpha_1}] f(\pi_1) f(\pi_2) g(\pi_3) g(\pi_4) (-\LL^{1/2})^{-(l(\pi_3)-l(\pi_4) )^2},\]
where we are given
\[  f(\pi ) =\prod_{i\geq 1}\LL^{b_i^2}/[\GL_{b_i}]\textrm{ for }  \pi=(1^{b_1}2^{b_2}3^{b_3}\cdots), \]
and
\[ g(\pi ) =\prod_{i\geq 1} (-\LL^{1/2})^{b_i^2}/[\GL_{b_i}]\textrm{ for }  \pi=(1^{b_1}2^{b_2}3^{b_3}\cdots) .\]
Substituting this into the generating series gives

\[\begin{array}{rcl} N(y_0,y_1) & = &\displaystyle\sum_{\alpha_0,\alpha_1\geq 0}\sum_{\substack{\pi_1,\pi_2,\pi_3,\pi_4\\ \alpha_0 = |\pi_1|+|\pi_2|+|\pi_3|+|\pi_4|-l(\pi_4)\\ \alpha_1 = |\pi_1|+|\pi_2|+|\pi_3|+|\pi_4|-l(\pi_3) } } f(\pi_1)f(\pi_2)g(\pi_3)g(\pi_4)y_0^{\alpha_0}y_1^{\alpha_1}\\
 & = & \displaystyle\sum_{\pi_1}f(\pi_1)(y_0y_1)^{|\pi_1|} \displaystyle\sum_{\pi_2}f(\pi_2)(y_0y_1)^{|\pi_2|}\displaystyle\sum_{\pi_3}g(\pi_3)(y_0y_1)^{|\pi_3|}y_1^{-l(\pi_3)}\\
 & &\cdot \displaystyle\sum_{\pi_4} g(\pi_4)(y_0y_1)^{|\pi_4|}y_0^{-l(\pi_4)}.\end{array}\]

The series for $f$ and $g$ have well know formulas \cite{macdonald_symmetric}
\[\sum_{\pi} f(\pi)y^{|\pi|} =\prod_{i,j=1}^{\infty} (1-\LL^{1-j}y^i)^{-1} =\Exp\bigg(\frac{\LL}{\LL -1}\sum_{n\geq 1} y^n\bigg),\] 
and 
\[\sum_{\pi} g(\pi)y^{|\pi|}a^{-l(\pi)} =\prod_{i,j=1}^{\infty} (1+(-\LL^{1/2})^{-2j+1}y^{i}a^{-1}) = \Exp\bigg(\frac{\LL^{1/2}}{1-\LL}\sum_{n\geq 1} y^n a^{-1}\bigg).\]
Hence
\[N(y_0,y_1) =\Exp\bigg(\frac{2\LL}{\LL -1}\sum_{n\geq 1} (y_0y_1)^n\bigg)\cdot\Exp\bigg(\frac{-\LL^{1/2}}{\LL-1}\sum_{n\geq 1} y_0^ny_1^{n-1} + y_0^{n-1}y_1^{n}\bigg).\]
Multiplying the series $I$ and $N$ gives
\[ A_U(y_0, y_1) =\Exp\bigg(\frac{ (\LL +\LL^2)y_0y_1 -\LL^{1/2}(y_0+y_1)}{\LL -1}\sum_{n\geq 0} (y_0y_1)^n\bigg).\]

\subsection{Second proof: another dimensional reduction}

Recall that representations of the cut algebra $J_{W,I}$ are given by triples $(A_2,B_1,B_2)$, where 
$A_2:V_0\to V_1$, $B_1,B_2:V_1\to V_0$ are linear maps satisfying
\begin{equation}
B_1A_2B_2=B_2A_2B_1.
\label{eq:rel1}
\end{equation}
The pair $(A_2,B_2)$ gives a representation of the quiver 
$$C^2=(0,1;a:0\to1,b:1\to0).$$
Given the dimension vector $\al\in\cN^2$, let $R(J_{W,I},\al)$ be the space of representations of $J_{W,I}$ having dimension vector \al.
Let $R(C^2,\al)$ be the space of representations of $C^2$ having dimension 
vector \al. There is a forgetful map 
$$g:R(J_{W,I},\al)\to R(C^2,\al),\qquad (A_2,B_1,B_2)\to(A_2,B_2).$$
Its fibers are linear vector spaces. This map is equivariant with respect to the natural action 
of $\GG_\al=\GL_{\al_0}\xx\GL_{\al_1}$ on both sides.
Given a $C^2$-representation $M=(M_0,M_1;M_a,M_b)$, let $\rho(M)$ be the dimension of the fiber of $g$ over $M$. 
Let $M^0=(M_0;M_bM_a)$ and $M^1=(M_1;M_aM_b)$ be representations of the Jordan quiver $C^1$ (one vertex and one loop). 
Then it follows from \eqref{eq:rel1} that
$$\rho(M)=\dim\Hom_{C^1}(M^1,M^0).$$
More generally, for any two representations of $C^2$
$$M=(M_0,M_1;M_a,M_b),\quad N=(N_0,N_1;N_a,N_b)$$
we define
$$\rho(M,N)=\dim\Hom_{C^1}(M^1,N^0).$$

If $M$ is some representation of $C^2$ having dimension vector $\al$, then the contribution of its $\GG_\al$-orbit 
(\ie isomorphism class) to $[R(C^2,\al)]/[\GG_\al]$ is
$1/[\Aut M]$. The contribution of the preimage of its $\GG_\al$-orbit 
to $[R(J_{W,I},\al)]/[\GG_\al]$ is $\cL^{\rho(M)}/[\Aut M]$.

Let $M=\oplus M_i^{n_i}$ be a decomposition of a $C^2$-representation $M$ into the sum of indecomposable representations. Then by \cite[Theorem 1.1]{mozgovoy_motivicb}
$$[\Aut M]=[\End(M)]\cdot\prod_i(\cL\inv)_{n_i},$$
where $(q)_n=(q;q)_n=\prod_{k=1}^n(1-q^k)$ is the $q$-Pochhammer symbol.
Thus the contribution of the preimage of the $\GG_\al$-orbit of $M$ to $[R(J_{W,I},\al)]/[\GG_\al]$ is
\begin{equation}
\frac{\cL^{\rho(M)}}{[\Aut M]}
=\frac{\cL^{\rho(M,M)-h(M,M)}}{\prod_i(\cL\inv)_{n_i}}
=\frac{\prod_{i,j}\cL^{n_in_j(\rho(M_i,M_j)-h(M_i,M_j))}}{\prod_i(\cL\inv)_{n_i}}
,
\label{eq:4}
\end{equation}
where $h(M,N)=\dim\Hom(M,N)$ for any $C^2$-representations $M,N$.
We will compute the numbers $h(M,N)$ and $\rho(M,N)$ for indecomposable representations $M,N$ of $C^2$.
As is well known, the indecomposable representations of $C^2$ are the following:
\begin{enumerate}
	\item Representations $I_n$ of dimension $(n,n-1)$, $n\ge1$.
	\item Representations $P_n$ of dimension $(n-1,n)$, $n\ge1$.
	\item Representations $R_{t,n}=(I_n,J_{t,n})$, $n\ge1$, $t\in\cC$, of dimension $(n,n)$. There are also 
representations $R_{\infty,n}=(J_{0,n},I_n)$, $n\ge1$, of dimension $(n,n)$. Here $J_{t,n}$ denotes the Jordan block of size $n$ with value $t$ on the diagonal.
\end{enumerate}

\begin{rk}
Define duality on representations of $C^2$ by 
$$D(M_0,M_1;M_{12},M_{21})=(M_0\dual,M_1\dual;M_{21}\dual,M_{12}\dual).$$
Then
$\Hom(DM,DN)=\Hom(N,M)\dual$ and
$$D(I_n)=I_n,\quad D(P_n)=P_n,\quad D(R_{t,n})=R_{t\inv,n}.$$
Define equivalence (cyclic shift) by 
$$C(M_0,M_1;M_{12},M_{21})=(M_1,M_0;M_{21},M_{12}).$$
Then
$\Hom(CM,CN)=\Hom(M,N)$ and 
$$C(I_n)=P_n,\quad C(P_n)=I_n,\quad C(R_{t,n})=R_{t\inv,n}.$$
\end{rk}

The proofs of the following two propositions are easy exercises. 

\begin{prop}
We have
\begin{enumerate}
\item $h(R_{s,m},R_{t,n})=\min\set{m,n}$ if $s=t$ or $s,t\in\set{0,\infty}$. It is $0$ otherwise.
	\item $h(I_m,R_{t,n})=h(R_{t,n},P_m)=\case{
	\min\set{m-1,n}&t=0;\\
	\min\set{m,n}&t=\infty;\\
	0&t\in\cC^*.}$
	\item $h(R_{t,n},I_m)=h(P_m,R_{t,n})=\case{
	\min\set{m-1,n}&t=\infty;\\
	\min\set{m,n}&t=0;\\
	0&t\in\cC^*.}$
	\item $h(I_m,I_n)=h(P_m,P_n)=\min\set{m,n}$.
	\item $h(I_m,P_n)=h(P_n,I_m)=\min\set{m,n}-1$.
\end{enumerate}
\end{prop}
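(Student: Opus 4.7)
The plan is to compute every Hom dimension by fixing explicit matrix models of the indecomposables and directly counting the dimension of the linear system
\[
f_1 M_a = N_a f_0,\qquad f_0 M_b = N_b f_1
\]
defining a morphism $f=(f_0,f_1)\colon M\to N$. Concretely, take $I_n$ with $V_0=\cC^n$, $V_1=\cC^{n-1}$, $M_a$ the inclusion onto the first $n-1$ coordinates and $M_b$ the projection onto the last $n-1$; obtain $P_n$ by applying the cyclic equivalence $C$; and take $R_{t,n}$ with $V_0=V_1=\cC^n$ and $(M_a,M_b)=(\Id,J_{t,n})$ for $t\in\cC$, $(M_a,M_b)=(J_{0,n},\Id)$ for $t=\infty$. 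The dualities $\Hom(DM,DN)\cong\Hom(N,M)\dual$ and $\Hom(CM,CN)\cong\Hom(M,N)$, together with the identifications recalled in the remark preceding the proposition, immediately reduce the listed pairwise equalities (such as $h(I_m,R_{t,n})=h(R_{t,n},P_m)$) to a single computation each, cutting the work roughly in half.

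Items (4) and (5) follow from the inclusion/projection models directly. A morphism $I_m\to I_n$ is pinned down by a compatible truncated shift, giving $\min\set{m,n}$ free parameters; the case $h(P_m,P_n)$ then falls out by $C$-symmetry. For $h(I_m,P_n)$, the extra basis vector outside the image of $M_a$ in the source and the analogous vector outside the image of $M_b$ in the target sit in mismatched degrees, eating exactly one parameter and producing $\min\set{m,n}-1$.

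For items (1)--(3), the key observation is that $M_bM_a$ acting on the degree-$0$ part of $R_{t,n}$ is the Jordan block $J_{t,n}$ (respectively $J_{0,n}$ when $t=\infty$), so any map $f_0$ between regulars must intertwine two Jordan blocks; this immediately gives (1) whenever neither eigenvalue is in $\set{0,\infty}$, namely $\min\set{m,n}$ when $s=t$ and $0$ otherwise. The exceptional pairs $s,t\in\set{0,\infty}$ come from the fact that whenever one of $M_a,M_b$ is invertible, composition with its inverse produces extra morphisms invisible at the level of $M_bM_a$; a short case check then realizes exactly the claimed $\min\set{m,n}$. The $-1$ corrections in (2)--(3) arise by the same mechanism: when the nilpotent direction of $R_{t,n}$ aligns with the rank-drop direction of the structure map in $I_m$ or $P_m$, one intertwining equation forces an extra vanishing and reduces $\min\set{m,n}$ to $\min\set{m-1,n}$ exactly as stated. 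The main obstacle is the combinatorial bookkeeping of matching each degeneration to the correct kernel/cokernel direction, but once the matrix models are fixed every case reduces to a one-step linear algebra exercise.
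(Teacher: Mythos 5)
The paper offers no argument for this proposition---it is explicitly left as an easy exercise---and your proof is the intended verification: fixing explicit matrix models of $I_n$, $P_n$, $R_{t,n}$, using the dualities $D$ and $C$ (which indeed give $h(R_{t,n},P_m)=h(I_m,R_{t,n})$ and $h(R_{t,n},I_m)=h(P_m,R_{t,n})$ with the same $t$, since $t\mapsto t^{-1}\mapsto t$ under the composite) to halve the case list, and then counting solutions of the intertwining equations $f_1M_a=N_af_0$, $f_0M_b=N_bf_1$, which does produce exactly the stated values, including the Jordan-block intertwiner count for the regular--regular pairs and the $\min\{m-1,n\}$ corrections. The only blemish is terminological: in your model of $I_n$ the words ``inclusion'' and ``projection'' are interchanged (the map $\cC^n\to\cC^{n-1}$ must be the projection onto the first $n-1$ coordinates and $\cC^{n-1}\to\cC^n$ the inclusion into the last $n-1$), which does not affect the computation.
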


\begin{prop}
We have
\begin{enumerate}
	\item $\rho(R_{s,m},R_{t,n})=\min\set{m,n}$ if $s=t$ or $s,t\in\set{0,\infty}$. It is $0$ otherwise.
	\item $\rho(I_m,R_{t,n})=\rho(R_{t,n},P_m)=\case{
	\min\set{m-1,n}&t=0,\infty;\\
	0&t\in\cC^*.}$
	\item $\rho(R_{t,n},I_m)=\rho(P_m,R_{t,n})=\case{
	\min\set{m,n}&t=0,\infty;\\
	0&t\in\cC^*.}$
	\item $\rho(I_m,I_n)=\rho(P_n,P_m)=\min\set{m-1,n}$.
	\item $\rho(I_m,P_n)=\min\set{m,n}-1$.
	\item $\rho(P_n,I_m)=\min\set{m,n}$.
\end{enumerate}
\end{prop}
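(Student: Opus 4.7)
\medskip

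The plan is to reduce the computation of $\rho$ entirely to a standard Hom calculation for Jordan-quiver representations. Recall that $\rho(M,N)=\dim\Hom_{C^1}(M^1,N^0)$, where $M^1=(M_1;M_aM_b)$ and $N^0=(N_0;N_bN_a)$. Thus the first (and essentially only nontrivial) step is to identify, for each indecomposable $C^2$-representation $X$, the underlying Jordan-quiver representations $X^0$ and $X^1$. By inspecting the normal forms listed just before the proposition:
\begin{itemize}
\item for $R_{t,n}$ with $t\in\cC$ we have $M_a=\Id$, $M_b=J_{t,n}$, so both $R_{t,n}^0$ and $R_{t,n}^1$ equal $(\cC^n;J_{t,n})$;
\item for $R_{\infty,n}$ we have $M_a=J_{0,n}$, $M_b=\Id$, so both $R_{\infty,n}^0$ and $R_{\infty,n}^1$ equal $(\cC^n;J_{0,n})$;
\item for $I_n$ (dimension $(n,n-1)$), a direct calculation in the obvious basis shows $I_n^0=(\cC^n;J_{0,n})$ and $I_n^1=(\cC^{n-1};J_{0,n-1})$;
\item for $P_n$ (dimension $(n-1,n)$), the cyclic shift $C$ relating $P_n$ to $I_n$ (see the preceding remark, noting $C$ swaps $X^0$ and $X^1$) gives $P_n^0=(\cC^{n-1};J_{0,n-1})$ and $P_n^1=(\cC^n;J_{0,n})$.
\end{itemize}

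Next I invoke the well-known Hom formula for Jordan-block representations of $C^1$: for any $s,t\in\cC$,
\[
\dim\Hom_{C^1}\bigl((\cC^m;J_{s,m}),(\cC^n;J_{t,n})\bigr)=
\begin{cases}\min\{m,n\} & s=t,\\ 0 & s\ne t,\end{cases}
\]
which is standard (a map intertwining two Jordan blocks must respect generalized eigenspaces, and on a single eigenvalue one counts solutions of $fJ_{t,m}=J_{t,n}f$ directly). Plugging in the identifications above yields all six cases. For instance, $\rho(R_{s,m},R_{t,n})=\dim\Hom_{C^1}((\cC^m;J_{s,m}),(\cC^n;J_{t,n}))$ vanishes unless the generalized eigenvalues agree, giving the dichotomy in~(1): the answer is $\min\{m,n\}$ when $s=t$, and also when $\{s,t\}\subset\{0,\infty\}$ (since both choices produce the \emph{same} nilpotent Jordan block in the Jordan-quiver picture), and zero otherwise. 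The remaining formulae (2)--(6) are immediate: each reduces to $\dim\Hom_{C^1}(J_{u,p},J_{v,q})$ for sizes $p,q$ read off from the table above (e.g.\ $\rho(I_m,P_n)$ involves blocks of sizes $m-1$ and $n-1$, giving $\min\{m,n\}-1$, whereas $\rho(P_n,I_m)$ involves blocks of sizes $n$ and $m$, giving $\min\{m,n\}$).

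There is essentially no obstacle: the only subtlety is getting the sizes on each side right, i.e.\ correctly computing $X^0$ and $X^1$ for the ``boundary'' indecomposables $I_n$ and $P_n$, and noticing the asymmetry between the $0$- and $1$-vertex that produces the $\pm 1$ shifts distinguishing cases (4)--(6). Once these four identifications are in place, the proposition is a one-line consequence of the Jordan-block Hom formula; in particular the reason the real roots $\al\in\Delta^{\re}_+$ and the imaginary roots $\al\in\Delta^{\im}_+$ will eventually behave differently in Theorem~\ref{thm main universal result} is already visible here in the $-1$ appearing in $\rho(I_m,P_n)$ but not in $\rho(P_n,I_m)$.
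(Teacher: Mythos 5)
Your argument is correct, and it is exactly the intended one: the paper leaves this proposition as an ``easy exercise,'' and the evident route is precisely yours --- identify $X^0$ and $X^1$ for each indecomposable (with the $\pm1$ size shifts for $I_n$ and $P_n$ coming from the dimension vectors $(n,n-1)$ and $(n-1,n)$, and $R^0_{t,n}=R^1_{t,n}=J_{t,n}$ resp.\ $J_{0,n}$ for $t=\infty$), then apply the standard formula $\dim\Hom_{C^1}(J_{s,m},J_{t,n})=\delta_{s,t}\min\{m,n\}$. All six cases check out against this reduction, so nothing further is needed.
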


\begin{cor}
\label{cor:rho-h}
For any $C^2$-representations $M,N$,
let\[d(M,N)=\rho(M,N)-h(M,N).\] If $M,N$ are indecomposable, then
$$d(M,N)+d(N,M)
=\case{
1& M=I_m, N=P_n;\\
-1-\delta_{m,n}& M=I_m, N=I_n\text{ or }M=P_m, N=P_n;\\
0&\text{otherwise.}
}$$
\end{cor}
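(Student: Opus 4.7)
The plan is to prove Corollary \ref{cor:rho-h} by direct case analysis, reading off $\rho(M,N)$ and $h(M,N)$ from the two preceding propositions for each pair of indecomposables and tabulating $d(M,N)+d(N,M)=\rho(M,N)+\rho(N,M)-h(M,N)-h(N,M)$. Since the indecomposables of $C^2$ come in three families $\{I_m\}$, $\{P_m\}$, $\{R_{t,m} : t\in\cP^1\}$, there are six unordered type-pairs to examine, and in most of these $\rho$ and $h$ take the same value so that the difference vanishes for free.

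Specifically, I would first dispose of the easy cases. For any pair $(R_{s,m},R_{t,n})$, the two propositions give $\rho=h$ (both $\min\{m,n\}$ when $s=t$ or $s,t\in\{0,\infty\}$, and both $0$ otherwise), so $d+d^{\mathrm{op}}=0$. Whenever one factor is $R_{t,n}$ with $t\in\cC^*$, both $\rho$ and $h$ vanish against $I_m$ or $P_m$ and the sum is again $0$. For $(I_m,R_{0,n})$ and $(P_m,R_{0,n})$ the values of $\rho$ and $h$ again match exactly in each direction. The only subtle mixed case is $t=\infty$: there $\rho(I_m,R_{\infty,n})=\min\{m-1,n\}$ while $h(I_m,R_{\infty,n})=\min\{m,n\}$ (off by one in one direction), but $\rho(R_{\infty,n},I_m)=\min\{m,n\}$ while $h(R_{\infty,n},I_m)=\min\{m-1,n\}$ (off by one in the opposite direction), so the two off-by-ones cancel; the analogous computation handles $(P_m,R_{\infty,n})$.

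The remaining three cases $(I_m,I_n)$, $(P_m,P_n)$, $(I_m,P_n)$ are the only ones producing a nonzero answer. The elementary identity
\[
\min\{m-1,n\}-\min\{m,n\} \;=\; \begin{cases} -1 & m\le n,\\ \phantom{-}0 & m>n,\end{cases}
\]
summed symmetrically in $m,n$, gives $-1-\delta_{m,n}$. For $(I_m,I_n)$ one has $\rho=\min\{m-1,n\}$ and $h=\min\{m,n\}$, directly yielding $-1-\delta_{m,n}$; for $(P_m,P_n)$ the same identity applies because $\rho(P_n,P_m)=\min\{m-1,n\}$ while $h$ is symmetric; and for $(I_m,P_n)$ one finds $d(I_m,P_n)=0$ and $d(P_n,I_m)=\min\{m,n\}-(\min\{m,n\}-1)=1$, giving $+1$. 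Matching these values against the statement completes the proof.

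There is no real obstacle here beyond careful bookkeeping; the only thing worth flagging is to keep straight in which slot the off-by-one $\min\{m-1,n\}$ appears in each row of the two propositions, since the asymmetry between $I_m$ and $P_n$ (and the role of $t=0$ versus $t=\infty$) is the sole source of nonzero contributions.
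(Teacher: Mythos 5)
Your case-by-case verification is correct and is precisely the argument the paper intends: the corollary is stated as an immediate consequence of the two propositions listing $h$ and $\rho$ on indecomposables, and your tabulation (including the cancelling off-by-ones in the $t=\infty$ mixed cases and the identity $\min\{m-1,n\}-\min\{m,n\}$ summed symmetrically giving $-1-\delta_{m,n}$) matches the values in the statement. Nothing further is needed.
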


\begin{proof}[Proof of Theorem \ref{thm main universal result}]
We can decompose any $C^2$-representation as
$$M=I\oplus P\oplus\bigoplus_{t\in\cP^1}R_t=\bigoplus_{i\ge1} I_i^{m_i}\oplus\bigoplus_{i\ge1}P_i^{n_i}\oplus
\bigoplus_{t\in\cP^1}\bigoplus_{i\ge1}R_{t,i}^{r_i(t)}.$$
With the representation $M$ we associate partitions $\mu,\eta\in\lP$ and $\la(t)\in\lP$, for $t\in\cP^1$, in the following way:
$$\mu_k=\sum_{i\ge k}m_i,\qquad\eta_k=\sum_{i\ge k}n_i,\qquad\la_k(t)=\sum_{i\ge k}r_i(t).$$
Applying Corollary \ref{cor:rho-h}, we obtain
\begin{multline}
\label{eq:rho-h}
\rho(M,M)-h(M,M)=\sum_{i,j\ge1}m_in_j-\sum_{i\ge j\ge1}(m_im_j+n_in_j)\\
=-\oh\bigg(\Big(\sum_{i\ge1}(m_i-n_i)\Big)^2+\sum_{i\ge1}m_i^2+\sum_{i\ge1}n_i^2\bigg),
\end{multline}
an expression that we are going to denote by $d(\mu,\eta)$.
The dimension vectors of the summands of $M$ are given by
\begin{align*}
\udim I=&\bigg(\sum_{i\ge1} im_i,\sum_{i\ge1}(i-1)m_i\bigg)=(\n\mu,\n\mu-\mu_1),\\
\udim P=&\bigg(\sum_{i\ge1}(i-1)n_i,\sum_{i\ge1}in_i)\bigg)=(\n\eta-\eta_1,\n\eta),\\
\udim R_t=&\bigg(\sum_{i\ge1}ir_i(t),\sum_{i\ge1}ir_i(t)\bigg)=(\n{\la(t)},\n{\la(t)}).
\end{align*}

Applying equation \eqref{eq:4} we obtain
\begin{multline}
A_U=\sum_{\al\in\cN^2}(-\cL^\oh)^{(\al_0-\al_1)^2}\frac{[R(J_{W,I},\al)]}{[\GG_\al]}y^\al\\
=\sum_{\mu,\eta\in\lP}(-\cL^\oh)^{(\mu_1-\eta_1)^2}\frac{y_0^{\n\mu+\n\eta-\eta_1}y_1^{\n\mu+\n\eta-\mu_1}\cL^{d(\mu,\eta)}}
{\prod_{i\ge1}(\cL\inv)_{\mu_i-\mu_{i+1}}(\cL\inv)_{\eta_i-\eta_{i+1}}}
\sum_{\la:\cP^1\to\lP}\prod_{t\in\cP^1}f_{\la(t)},
\end{multline}
where
$$f_\la=\frac{(y_0 y_1)^{\n\la}}{\prod_{i\ge1}(\cL\inv)_{\la_i-\la_{i+1}}}.$$

By Hua formula's (see \cite{hua_counting} or \cite[Theorem 6]{mozgovoy_computational})
applied to the quiver with one loop, we obtain
$$f=\sum_{\la\in\lP}f_\la=\Exp\bigg(\frac{\cL}{\cL-1}\sum_{n\ge1}(y_0y_1)^n\bigg).$$
Therefore, using the geometric description of power structures (Section \ref{lambda}), we obtain
$$\sum_{\la:\cP^1\to\lP}\prod_{t\in\cP^1}f_{\la(t)}=
\Pow(f,[\cP^1])=\Exp\bigg(\frac{(\cL+1)\cL}{\cL-1}\sum_{n\ge1}(y_0y_1)^n\bigg).$$

On the other hand it follows from \eqref{eq:rho-h} that
$$(\mu_1-\eta_1)^2+2d(\mu,\eta)=-\sum_{i\ge1} m_i^2-\sum_{i\ge1} n_i^2$$
and therefore
\begin{equation}
A_U=\Exp\bigg(\frac{\cL+\cL^2}{\cL-1}\sum_{n\ge1}(y_0y_1)^n\bigg)
\sum_{\mu,\eta\in\lP}
\frac{y_0^{\n\mu+\n\eta-\eta_1}y_1^{\n\mu+\n\eta-\mu_1}(-\cL^\oh)^{-\sum_i(m_i^2+n_i^2)}}
{\prod_{i\ge1}(\cL\inv)_{m_i}(\cL\inv)_{n_i}},
\label{eq:A'}
\end{equation}
where we denote $m_i=\mu_i-\mu_{i+1}$, $n_i=\eta_i-\eta_{i+1}$.
Define
$$H(x,q^\oh)
=\sum_{n\ge0}\frac{(-q^\oh)^{-n^2}x^n}{(q\inv)_n}
=\sum_{n\ge1}\frac{(xq^\oh)^n}{(q)_n}
=\Exp\bigg(\frac{xq^\oh}{1-q}\bigg),$$
where the last equality follows from the Heine formula \cite{kac_quantum,mozgovoy_fermionic}.
Then the sum in \eqref{eq:A'} can be written in the form
\begin{multline*}
\sum_{(m_i)_{i\ge1},(n_i)_{i\ge1}}
\prod_{i\ge1}\frac{(y_0^iy_1^{i-1})^{m_i}(y_0^{i-1}y_1^i)^{n_i}(-\cL^\oh)^{-m_i^2-n_i^2}}
{(\cL\inv)_{m_i}(\cL\inv)_{n_i}}\\
=\prod_{i\ge1}\bigg(
\sum_{m\ge0}\frac{(y_0^iy_1^{i-1})^{m}(-\cL^\oh)^{-m^2}}{(\cL\inv)_{m}}
\sum_{n\ge0}\frac{(y_0^{i-1}y_1^{i})^{n}(-\cL^\oh)^{-n^2}}{(\cL\inv)_{n}}\bigg)\\
=\prod_{i\ge1}H(y_0^iy_1^{i-1},\cL^\oh)H(y_0^{i-1}y_1^{i},\cL^\oh)
=\Exp\bigg(\frac{\cL^\oh}{1-\cL}\sum_{i\ge1}(y_0^iy_1^{i-1}+y_0^{i-1}y_1^{i})\bigg).
\end{multline*}
The second proof of Theorem \ref{thm main universal result} is complete.
\end{proof}

\subsection{Decomposing the universal series}
\label{sec:decompose}

In this section, we decompose the product from Theorem \ref{thm main universal result}. We will say that a stability parameter $\ze$ is generic, if for any stable $J$-module $V$, we have $\ze\cdot\udim V\ne0$.
For generic stability parameter $\ze$, let $\gM_\ze^+(J,\al)$ (resp.\ $\gM_\ze^-(J,\al)$) denote the moduli stacks 
of $J$-modules $V$ such that $\udim V=\al$ and such that all the HN factors $F$ of $V$ with respect to the stability parameter \ze satisfy $\ze\cdot\udim F>0$ (resp. $<0$).
We put
$$A_\ze^\pm=\sum_{\al\in\cN_{Q_0}}[\gM_\ze^\pm(J,\al)]_\vir\cdot y^\al.$$ 

\begin{lem}
\label{lmm:decompose}
The generating series $A_\ze^\pm$ are given by 
\[A_\ze^\pm=\displaystyle\prod_{\substack{\alpha\in\Delta_+\\ \pm\ze\cdot\alpha<0}}A^\alpha,\]
where $A^\al=A^\al(y_0,y_1)$ were defined in Theorem \ref{thm main universal result}. We have
\[A_U=A_\ze^+A_\ze^-.
\]
\end{lem}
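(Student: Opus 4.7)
The plan is to combine three inputs. First, for the conifold quiver the Euler--Ringel form is symmetric (there are the same number of arrows in each direction between the two vertices), so the skew-symmetric form $\ang{\cdot,\cdot}$ vanishes identically. Thus the twisted algebra $\That_Q$ is commutative and products in it may be freely reordered. Second, Theorem \ref{thm main universal result} provides the explicit factorization $A_U=\prod_{\al\in\De_+}A^\al$. Third, Lemma \ref{factorlemma} together with Theorem \ref{thm_FP} gives the Kontsevich--Soibelman ray factorization $A_U=\prod^{\curvearrowright}_l A_{Z,l}$, arising from the Harder--Narasimhan stratification of moduli.

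The key step is to match these two factorizations of $A_U$ in the commutative algebra $\That_Q$. Each $A^\al$ appearing in Theorem \ref{thm main universal result} is a power series in $y^{n\al}$ with $n\ge 0$, so it lies entirely in the single ray subalgebra $\That_{Z,l_\al}$ where $l_\al=\R_{\ge 0}Z(\al)$. For real positive roots the assignment $\al\mapsto l_\al$ is injective (no $\al\in\De_+^\re$ is a positive integer multiple of another, as is visible from the explicit lists in the introduction), so each such ray carries exactly one factor $A^\al$. All imaginary roots $(k,k)$ for $k\ge 1$ share the single ray $l_{\im}=\R_{\ge 0}Z((1,1))$, whose ray factor is therefore $\prod_{k\ge 1}A^{(k,k)}$. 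The uniqueness statement in Lemma \ref{factorlemma} then identifies these explicit groupings with the ray factors $A_{Z,l}$.

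Finally, for generic $\ze$ no positive root $\al$ satisfies $\ze\cdot\al=0$, so every ray $l_\al$ lies strictly in one of the two half-planes determined by the sign of $\ze\cdot\al$. The HN stratification underlying Theorem \ref{thm_FP}, applied at the coarser level of the two-step filtration by the sign of $\ze\cdot\udim F$ on HN factors, identifies $A_\ze^\pm$ with the corresponding sub-product of ray factors. Combining with the ray identification of the previous step yields the formula $A_\ze^\pm=\prod_{\al\in\De_+,\,\pm\ze\cdot\al<0}A^\al$, and $A_U=A_\ze^+\cdot A_\ze^-$ follows immediately from $A_U=\prod_{\al\in\De_+}A^\al$ by commutativity. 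The main step I expect difficulty with is this HN-theoretic identification of $A_\ze^\pm$ with the sub-product; once that is granted, everything else is formal, and commutativity of $\That_Q$ removes any concern over the ordering of factors within $A_\ze^+$ or $A_\ze^-$.
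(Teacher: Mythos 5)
Your proposal is correct and follows essentially the same route as the paper's proof: combine the ray factorization coming from Theorem \ref{thm_FP} and the uniqueness statement of Lemma \ref{factorlemma} with the explicit root factorization $A_U=\prod_{\al\in\De_+}A^\al$ of Theorem \ref{thm main universal result}, identify $A_\ze^\pm$ with the product of the ray factors $A_{\ze,\mu}$ over $\pm\mu>0$, and use commutativity of $\That_Q$; the paper does exactly this, grouping the $A^\al$ by the slope $\mu(\al)$. One small inaccuracy: distinct real roots need not land on distinct rays $l_\al=\R_{\ge0}Z(\al)$, since the ray depends on $\ze$ and not merely on proportionality of the roots in $\cZ^2$ --- for instance $\ze=(1,1)$ is generic in the paper's sense and gives every root the same slope --- but this slip is harmless, because the uniqueness in Lemma \ref{factorlemma} identifies each ray factor with the product of \emph{all} $A^\al$ with $Z(\al)$ on that ray, and the final regrouping by the sign of $\ze\cdot\al$ is unaffected.
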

\begin{proof}
By Theorem \ref{thm_FP}, we have a factorization in $\That_Q$ (note that $\That_Q$ is commutative and we don't need to take the ordered product)
$$A_U=\prod_{\mu\in\cR}A_{\ze,\mu},$$
where $A_{\ze,\mu}$ were defined in Definition \ref{A_{Z,l}}.
Similarly we have $A_\ze^{\pm}=\prod_{\pm\mu>0}A_{\ze,\mu}$.

By Theorem \ref{thm main universal result}, we have $$A_{U}=\prod_{\al\in\De_+}A^\al,$$
where $A^\al$ contain only powers $y^{k\al}$, $k\ge0$. 
By the uniqueness of the factorizations from Lemma~\ref{factorlemma}, we obtain
$$A_{\ze,\mu}=\prod_{\substack{\al\in\De_+\\ \mu(\al)=\mu}}A^\al$$
and the statement of the lemma follows.
\end{proof}

\section{Motivic DT with framing}
\label{sec:framing}
\subsection{Framed quiver}
Let $Q$ be a quiver with a distinguished vertex $0\in Q_0$ and let $W$ be a potential.
We denote by $\Qtilde$ the corresponding framed quiver, the new quiver obtained from $Q$ by adding a new vertex $\infty$ and a single new arrow $\infty\to 0$. 
Let $\Jtilde=J_{\Qtilde,W}$ be the Jacobian algebra corresponding to the quiver with potential $(\Qtilde,W)$, where we view $W$ as a potential for $\Qtilde$ in the obvious way. Any $\wtl Q$-representation (resp.\ $\wtl J$-module) $\wtl V$ can be written as a triple $(V,\wtl V_\infty,s)$, where $V$ is a $Q$-representation (resp.\ $J$-module), $\wtl V_\infty$ is a vector space, and $s:\wtl V_\infty\to V_0$ is a linear map. We will always do this identification without mentioning.

The twisted motivic algebra $\That_Q$ of the original quiver sits as a subalgebra 
inside the algebra $\That_{\Qtilde}$ associated to the framed quiver $\Qtilde$. Note that in $\That_{\Qtilde}$ we have
\begin{equation}
y_\infty\cdot y^{(\al,0)}=(-\cL^\oh)^{-\al_0}\cdot y^{(\al,1)}=\cL^{-\al_0} \cdot y^{(\al,0)}\cdot y_\infty,
\label{eq:y infty rel}
\end{equation}
where we put
\[y_\infty=y^{(0,1)}.\]
In particular, $\That_{\Qtilde}$ is never commutative.

\subsection{Stability for framed representations}

Let $\ze\in\cR^{Q_0}$ be a vector, which we will refer to as the stability parameter.
\begin{defn} 
A $\Qtilde$-representation (resp.\ $\Jtilde$-module) $\wtl V$ with $\dim\wtl V_\infty= 1$ is said to be $\ze$-(semi)stable, if it is (semi)stable with respect to $(\ze,\ze_\infty)\in\cR^{\wtl Q}$ (see Definition \ref{semist}), where $\ze_\infty=-\ze\cdot\udim V$. Equivalently, the following conditions should be satisfied:
\begin{itemize}
\item
for any $\wtl Q$-subrepresentation (resp.\ $\wtl J$-submodule) $0\ne\wtl V'\sb\wtl V$ with $\wtl V'_\infty=0$, we have
$$\ze\cdot\udim V' \,(\le)\, 0;$$
\item
for any $\wtl Q$-quotient representation (resp.\ $\wtl J$-quotient module)  $\wtl V\twoheadrightarrow\wtl V''\neq 0$ with $\wtl V''_\infty=0$, we have
\[
\ze\cdot\udim V'' \,(\geq )\, 0.
\]
\end{itemize}
\end{defn}

As in Section \ref{sec:decompose}, a stability parameter $\ze\in\cR^{Q_0}$ is said to be {\it generic}, if for any stable $J$-module $V$ we have $\ze\cdot\udim V\ne0$.

\subsection{Motivic DT invariants with framing}
For a stability parameter $\ze\in\cR^{Q_0}$ and a dimension vector $\al\in \cN^{Q_0}$, let as before $\ze_\infty=-\ze\cdot\al$, $\wtl\al=(\al,1)$, and let
$$\gM_\ze(\wtl Q,\al)=[R_{(\ze,\ze_{\infty})}(\wtl Q,\wtl\al)/\GG_\al],\qquad
\gM_\ze(\wtl J,\al)=[R_{(\ze,\ze_\infty)}(\wtl J,\wtl\al)/\GG_\al]$$
denote the moduli stack of $\ze$-stable $\wtl Q$-representations (resp.\ $\wtl J$-modules) with dimension vector $\wtl\al$. The corresponding stacks for the trivial stability $\ze=0$ will be denoted by $\gM(\wtl Q,\al)$ and $\gM(\wtl J,\al)$.

\begin{rk}
Note that the stack $\gM_\ze(\wtl Q,\al)$ is slightly different from the stack
$\gM_{(\ze,\ze_\infty)}(\wtl Q,\wtl\al)$ which was defined in \eqref{eq:def moduli stacks} to be $[R_{(\ze,\ze_\infty)}(\wtl Q,\wtl\al)/\GG_{\wtl\al}]$. The same applies to the stacks of $\wtl J$-modules. 
\end{rk}


\begin{defn}
Let
$$\Atilde_U=
\sum_{\al\in\cN^{Q_0}}
\bigl[\gM\bigl(\Jtilde,\al\bigr)\bigr]_\vir \cdot y^{\wtl\al}\in \That_{\Qtilde},$$
where $\bigl[\gM\bigl(\Jtilde,\al\bigr)\bigr]_\vir$ is defined similarly to \eqref{eq:bbs1}. For any stability parameter $\ze\in\cR^{Q_0}$ let 
\[
\Atilde_\ze=
\sum_{\al\in \cN^{Q_0}}
\bigl[\gM_\ze\bigl(\Jtilde,\al\bigr)\bigr]_\vir \cdot y^{\wtl\al}\in \That_{\Qtilde},
\]
where $\bigl[\gM_\ze\bigl(\Jtilde,\wtl\al\bigr)\bigr]_\vir$ is defined similarly to \eqref{eq:bbs2}.
Let also, as in the Introduction, 
\[
Z_\ze=
\sum_{\al\in\cN^{Q_0}}
\bigl[{\MM}_\ze\bigl(\Jtilde,\al\bigr)\bigr]_\vir\cdot y^\al\in
\That_{Q}.
\]
\end{defn}

\subsection{Relating the universal and framed series}
In this subsection we assume that $Q$ is a symmetric quiver and therefore $\That_Q$ is commutative. 
The following theorem relates results of the previous section on the universal series to the framed invariants of this section. 

\begin{thm} 
\label{thm_framed_vs_nonframed}
For generic stability parameter \ze, we have
\begin{equation}
Z_\ze
=\frac{A_\ze^-(-\cL^{\oh}y_0,y_1,\dots)}{A_\ze^-(-\cL^{-\oh}y_0,y_1,\dots)},
\end{equation}
where $A_\ze^-$ were defined in Section \ref{sec:decompose}.
\end{thm}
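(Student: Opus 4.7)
The plan is to bootstrap the identity from a framed analogue of the wall-crossing factorization (Theorem~\ref{thm_FP}). The key new input is the identity, in the noncommutative algebra $\That_{\wtl Q}$:
\[
\Atilde_U \;=\; A_\ze^+\cdot\Atilde_\ze\cdot A_\ze^-.
\]
For generic $\ze$, every framed $\wtl J$-module has a unique HN factor containing $\wtl V_\infty$; this factor is automatically $\ze$-stable, and its `extended slope' w.r.t.\ $(\ze,-\ze\cdot\al)$ is exactly $0$, so in the decreasing-slope HN filtration it sits between the unframed HN factors of strictly positive slope (appearing as subs) and those of strictly negative slope (as quotients). Since $I=\{a_1\}$ remains a cut for $(\wtl Q,W)$---the new arrow $\infty\to 0$ does not occur in $W$---the HN-stratification argument behind Theorem~\ref{thm_FP} applies verbatim and yields the factorization above.

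Two direct motivic computations then translate both sides into the commutative subalgebra $\That_Q\subset\That_{\wtl Q}$ via the element $y_\infty$. Because $W$ does not involve the framing arrow, $f_{\wtl\al}^{-1}(c)=f_\al^{-1}(c)\times\cC^{\al_0}$, and a dimension count gives $[\gM(\wtl J,\al)]_\vir=(-\cL^\oh)^{\al_0}[\gM(J,\al)]_\vir$; combined with the rewriting $y^{\wtl\al}=(-\cL^\oh)^{\al_0}y_\infty y^\al$ from \eqref{eq:y infty rel}, one obtains
\[
\Atilde_U \;=\; y_\infty\cdot A_U(\cL y_0,y_1).
\]
For generic $\ze$, the $\GG_\al$-action on the stable locus is free, giving $[\gM_\ze(\wtl J,\al)]_\vir=[\MM_\ze(\wtl J,\al)]_\vir$; the same rewriting yields
\[
\Atilde_\ze \;=\; y_\infty\cdot Z_\ze(-\cL^\oh y_0,y_1).
\]

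Substituting these into the framed factorization and using $f(y_0,y_1)\cdot y_\infty = y_\infty\cdot f(\cL y_0,y_1)$ for $f\in\That_Q$ (a consequence of \eqref{eq:y infty rel}) to move $y_\infty$ past $A_\ze^+$ to the far left, one cancels $y_\infty$ and lands, in commutative $\That_Q$, at
\[
A_U(\cL y_0,y_1) \;=\; A_\ze^+(\cL y_0,y_1)\cdot Z_\ze(-\cL^\oh y_0,y_1)\cdot A_\ze^-(y_0,y_1).
\]
Lemma~\ref{lmm:decompose} at the shifted argument gives $A_U(\cL y_0,y_1)=A_\ze^+(\cL y_0,y_1)\cdot A_\ze^-(\cL y_0,y_1)$, so dividing out $A_\ze^+(\cL y_0,y_1)$ produces
\[
Z_\ze(-\cL^\oh y_0,y_1) \;=\; \frac{A_\ze^-(\cL y_0,y_1)}{A_\ze^-(y_0,y_1)};
\]
the substitution $y_0\mapsto -\cL^{-\oh}y_0$ then yields the claimed identity.

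The main obstacle is the framed factorization. Although morally a direct application of HN-stratification to framed representations, one must verify that the unique $\ze$-stable framed factor occupies the middle slot for generic $\ze$, keep careful track of the noncommutative product ordering (and of the commutation-factor bookkeeping coming from the skew-symmetric form of $\wtl Q$) in $\That_{\wtl Q}$, and extend the cut-based torus-action argument behind Theorem~\ref{thm_FP} to framed moduli so that each HN-stratum contributes multiplicatively to $\Atilde_U$. Once this factorization is in hand, the remainder is a routine use of the commutation relation \eqref{eq:y infty rel} and of the unframed wall-crossing Lemma~\ref{lmm:decompose}.
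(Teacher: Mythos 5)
Your overall route is the same as the paper's: the framed factorization $\Atilde_U=A_\ze^+\cdot\Atilde_\ze\cdot A_\ze^-$, the identification $\Atilde_U=y_\infty\cdot A_U(\cL y_0,y_1)$ (your direct count $[\gM(\Jtilde,\al)]_\vir=(-\cL^\oh)^{\al_0}[\gM(J,\al)]_\vir$ is a correct rephrasing of Proposition~\ref{wtl A}), the identity $\Atilde_\ze=y_\infty\cdot Z_\ze(-\cL^\oh y_0,y_1)$, and then the commutation rule \eqref{eq:y infty rel} together with Lemma~\ref{lmm:decompose}; your final algebra reproduces the chain \eqref{eq_2}.

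The gap is in the one step you yourself single out as the main obstacle: your justification of the framed factorization does not work as stated. You argue that the HN factor of $\wtl V$ containing the framing vector has ``extended slope exactly $0$'' with respect to $(\ze,\ze_\infty)$, $\ze_\infty=-\ze\cdot\al$, and therefore sits between the positive and negative unframed factors. Only the total class $(\al,1)$ pairs to zero; the framed HN factor has dimension vector $(\be,1)$ with $\be$ in general a proper subvector of $\al$, so its slope is $(\ze\cdot\be-\ze\cdot\al)/(\n{\be}+1)$, which can have either sign. Consequently the HN filtration for $(\ze,\ze_\infty)$-stability can place unframed factors of negative $\ze$-pairing before the framed factor (and positive ones after it), and in addition $(\ze,\ze_\infty)$-semistability of that factor is not the same as its own framed $\ze$-semistability, which is defined using $-\ze\cdot\udim$ of the factor itself. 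So the HN-stratification argument does not ``apply verbatim''; this is precisely why Proposition~\ref{prop_filtration} introduces the perturbed central charge
\begin{equation*}
Z_{\ze,\eps}(\wtl\al)=-\ze\cdot\al+(\eps\n{\al}+\al_\infty)\sqrt{-1},\qquad 0<\eps\ll1,
\end{equation*}
for which unframed classes have phase near $0$ or $\pi$ according to the sign of $\ze\cdot\al$, framed classes have phase bounded away from both ends, and $Z_{\ze,\eps}$-stability of a framed module coincides with its $\ze$-stability; the HN filtration for $Z_{\ze,\eps}$ then yields the unique three-step filtration and hence Proposition~\ref{prop_factorization1}. Once you replace your slope-zero argument by this perturbation (keeping your correct remark that the cut, and hence the $\cC^*$-action needed for Theorem~\ref{thr:bbs}, extends to $\Qtilde$), the rest of your proof is complete and agrees with the paper's.
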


This result is~\cite[Corollary 4.17]{mozgovoy_wall-crossing}.
In the rest of this subsection, we provide an alternative approach to this theorem. 
The main difference is that here we study just two stability parameters, while the result in \cite{mozgovoy_wall-crossing} was obtained by studying an infinite sequence of parameters between these two.

\begin{prop}\label{prop_filtration}
Let $\wtl V$ be a $\wtl{Q}$-representation \textup{(}resp.\ a $\wtl{J}$-module\textup{)} with $\dim\wtl V_\infty=1$. 
Then there exists the unique filtration 
\[
0=\wtl U^0\sb\wtl U^1\sb\wtl U^2 \sb\wtl U^3=\wtl V
\]
such that with $\wtl V^i=\wtl U^i/\wtl U^{i-1}$ we have
\begin{enumerate}
\item $\wtl V^{1}_\infty=0$ and 
all the HN factors $F$ of $V^1$ with respect to the stability parameter \ze satisfy $\ze\cdot\udim F>0$,
\vspace{3pt}
\item $\wtl V^2_\infty=1$ and $\wtl V^2$ is $\ze$-semistable,
\vspace{3pt}
\item $\wtl V^{3}_\infty=0$ and 
all the HN factors $F$ of $V^3$ with respect to the stability parameter \ze satisfy $\ze\cdot\udim F<0$.
\end{enumerate}
\end{prop}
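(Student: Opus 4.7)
The plan is to construct the three-term filtration via two successive Harder--Narasimhan-type torsion pair constructions in the framed abelian category, and then verify that the middle piece is automatically $\ze$-semistable. Throughout write $\mathcal{T}_+$ for the class of $Q$-representations (resp.\ $J$-modules) with all HN slopes $>0$, $\mathcal{F}_+$ for those with all HN slopes $\le 0$, and similarly $\mathcal{T}_-$, $\mathcal{F}_-$ for slopes $\ge 0$ and $<0$. I use freely the standard torsion-pair facts that $\mathcal{T}_\pm$ is closed under quotients and extensions, $\mathcal{F}_\pm$ is closed under subobjects and extensions, and $\Hom(\mathcal{T}_+,\mathcal{F}_-)=0$.

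For $\wtl U^1$ I consider the collection of $\wtl Q$-subrepresentations $\wtl U\subset\wtl V$ with $\wtl U_\infty=0$ whose underlying $Q$-rep lies in $\mathcal{T}_+$. For any two such $U,U'$, the sum $U+U'$ sits in $0\to U\to U+U'\to U'/(U\cap U')\to 0$ with both outer terms in $\mathcal{T}_+$ (using quotient closure for $U'/(U\cap U')$), hence $U+U'\in\mathcal{T}_+$ by extension closure. So this class has a unique maximum $\wtl U^1$, and maximality forces $\bar V:=\wtl V/\wtl U^1$ to have underlying $Q$-rep in $\mathcal{F}_+$, since any nonzero $\mathcal{T}_+$-subobject of $V/U^1$ pulled back to $V$ would strictly enlarge $\wtl U^1$.

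For $\wtl U^2$ I consider dually the collection $\mathcal{C}$ of $\wtl Q$-subrepresentations $\wtl W\subset\bar V$ with $\wtl W_\infty=1$ (containing the framing line) such that $\bar V/\wtl W$ has trivial framing and underlying $Q$-rep in $\mathcal{F}_-$. This is nonempty ($\wtl W=\bar V$) and closed under intersection: the $1$-dimensional framing line lies in both $\wtl W_i$ and so in their intersection, and $\bar V/(\wtl W_1\cap\wtl W_2)\hookrightarrow\bar V/\wtl W_1\oplus\bar V/\wtl W_2\in\mathcal{F}_-$ lies in $\mathcal{F}_-$ by subobject closure. Let $\wtl U^2/\wtl U^1$ be the unique minimum; by construction $\wtl V^3=\wtl V/\wtl U^2\in\mathcal{F}_-$ has trivial framing. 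Semistability of $\wtl V^2=\wtl U^2/\wtl U^1$: condition (i) is immediate because any unframed subobject embeds in $V/U^1\in\mathcal{F}_+$. For condition (ii), a hypothetical unframed quotient $\wtl V''$ with $\ze\cdot\udim V''<0$ produces, via its $(\mathcal{T}_-,\mathcal{F}_-)$-decomposition, a nonzero further quotient in $\mathcal{F}_-$; the corresponding kernel $\wtl K'\subsetneq\wtl V^2$ still contains the framing, and $\bar V/\wtl K'$ is an extension of $\wtl V^3\in\mathcal{F}_-$ by an $\mathcal{F}_-$-object, hence lies in $\mathcal{F}_-$---so $\wtl K'\in\mathcal{C}$, contradicting minimality.

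Uniqueness for a second filtration $(\wtl U^{1'},\wtl U^{2'})$ proceeds in two steps. Maximality of $\wtl U^1$ gives $\wtl U^{1'}\subset\wtl U^1$, and the composite $\wtl U^1\hookrightarrow\wtl V\twoheadrightarrow\wtl V^{3'}$ vanishes by $\Hom(\mathcal{T}_+,\mathcal{F}_-)=0$, placing the image of $\wtl U^1$ in $\wtl V^{2'}$; this image is an unframed $\mathcal{T}_+$-subobject of the $\ze$-semistable $\wtl V^{2'}$ and must vanish by condition (i) (its top HN factor has positive slope, violating the $\ze\cdot\udim\le 0$ bound), so $\wtl U^1=\wtl U^{1'}$. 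Dually, $\wtl V^{2'}\in\mathcal{C}$ gives $\wtl U^2\subset\wtl U^{2'}$ by minimality, and $\wtl V^{2'}/(\wtl V^{2'}\cap\wtl V^2)$ is an unframed $\mathcal{F}_-$-quotient of $\wtl V^{2'}$ which must vanish by condition (ii), forcing $\wtl U^{2'}\subset\wtl U^2$. The main bookkeeping difficulty I anticipate is tracking the framing dimension through the torsion-pair manipulations, particularly in verifying that intersections in $\mathcal{C}$ preserve the framing line and that the minimality-contradicting $\wtl K'$ in the semistability proof genuinely lands in $\mathcal{C}$.
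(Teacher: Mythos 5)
Your construction is correct, but it follows a genuinely different route from the paper's. The paper proves the proposition by perturbing the central charge: it sets $Z_{\ze,\eps}(\al,\al_\infty)=-\ze\cdot\al+(\eps|\al|+\al_\infty)\sqrt{-1}$ for sufficiently small $\eps>0$, checks that for a framed object with $\dim\wtl V_\infty=1$ the unframed subobjects with $\ze\cdot\udim>0$ (resp.\ $<0$) have $Z_{\ze,\eps}$-phase strictly above (resp.\ below) that of the framed object, so that $\ze$-(semi)stability coincides with $Z_{\ze,\eps}$-(semi)stability, and then declares the required three-step filtration to be the Harder--Narasimhan filtration for $Z_{\ze,\eps}$ (with the unframed factors above and below the framed piece grouped together); existence and uniqueness are thus inherited wholesale from standard HN theory. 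You instead build the filtration directly from the two torsion pairs $(\mathcal{T}_+,\mathcal{F}_+)$ and $(\mathcal{T}_-,\mathcal{F}_-)$ on the unframed category: $\wtl U^1$ is the maximal unframed $\mathcal{T}_+$-subobject, $\wtl U^2$ comes from the minimal framing-containing subobject of $\wtl V/\wtl U^1$ with unframed $\mathcal{F}_-$-quotient, and you verify semistability of the middle factor and uniqueness by hand; all of these steps are sound (the minimality contradiction for condition (ii) and the two-step uniqueness argument via $\Hom(\mathcal{T}_+,\mathcal{F}_-)=0$ both check out, and the only tiny imprecision is that in the uniqueness step the violation of condition (i) is cleanest stated as: a nonzero $\mathcal{T}_+$-object has $\ze\cdot\udim>0$ since this pairing is the sum over its HN factors). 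The trade-off: the paper's argument is shorter and gets uniqueness for free, at the price of the slightly delicate choice of $\eps$ depending on the dimension vector; your argument avoids any perturbation and is self-contained, but in effect re-proves the relevant instance of HN existence and uniqueness through the torsion-pair formalism.
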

\begin{proof}
We will work only with $\wtl Q$-representations.
We take sufficiently small $\eps >0$ and define the central charge
\[
Z_{\ze,\eps}(\wtl{\al})=-\ze\cdot\al+(\eps\n\al+\al_\infty)\sqrt{-1},\qquad \wtl\al=(\al,\al_\infty).
\]
Let $\wtl{W}$ be a $\wtl{Q}$-representation with $\dim \wtl{W}_\infty=1$.
For any submodule $\wtl{W}'={W}'$ of $\wtl{W}$ with $\wtl{W}'_\infty =0$, we have
\[
\ze\cdot \udim {W}' \gtrless 0 
\iff 
\arg Z_{\ze,\eps}(\udim\wtl{W}')
\gtrless
\arg Z_{\ze,\eps}(\udim\wtl{W}).
\]
Hence $\wtl{W}$ is $Z_{\ze,\eps}$-stable if and only if it is $\ze$-stable.
Then, the Harder-Narashimhan filtration for $Z_{\ze,\eps}$-stability is the required filtration.
\end{proof}

The filtration from Proposition \ref{prop_filtration} 
induces the following factorization in the same way as Theorem \ref{thm_FP}:
\begin{prop}\label{prop_factorization1}
We have
$$\wtl A_U =  A_\ze^+ \cdot \wtl A_\ze \cdot A_\ze^-$$
in the motivic algebra $\That_{\wtl Q}$.
\end{prop}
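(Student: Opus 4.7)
The plan is to mimic the Harder--Narasimhan factorization argument underlying Theorem~\ref{thm_FP}, with the perturbed central charge $Z_{\ze,\eps}$ from the proof of Proposition~\ref{prop_filtration} playing the role of the central charge $Z$. The three-step filtration produced by Proposition~\ref{prop_filtration} is, by construction, nothing but the HN filtration of $\wtl V$ with respect to $Z_{\ze,\eps}$: the factor $\wtl V^1$ sits in the region of $\cH$ of phases strictly greater than $\arg Z_{\ze,\eps}(\wtl\al)$, the middle factor $\wtl V^2$ lies on the single ray through $Z_{\ze,\eps}(\wtl\al)$ and is forced to contain the framing because of its $\infty$-component, and $\wtl V^3$ sits in the region of strictly smaller phases.

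First I would stratify $\gM(\wtl J,\al)$ by the triples $(\al^1,\al^2,\al^3)$ of dimension vectors of the three HN factors; uniqueness of the filtration in Proposition~\ref{prop_filtration} identifies each stratum with the stack of $\wtl J$-modules equipped with such a canonical three-step filtration. A standard motivic Hall-algebra fibration then produces a morphism from this filtered stack down to
$$\gM_\ze^+(J,\al^1)\xx\gM_\ze(\wtl J,\al^2)\xx\gM_\ze^-(J,\al^3),$$
whose fibre over a triple $(\wtl V^1,\wtl V^2,\wtl V^3)$ is an affine space of rank equal to the total dimension of $\Ext^1$-groups between the successive quotients. Since $(\wtl Q,W)$ inherits the cut of $(Q,W)$, Theorem~\ref{thr:bbs} and the virtual-motive formula~\eqref{eq:bbs1} remain available throughout, so the virtual motive of each stratum reduces to honest motives of cut representation spaces multiplied by an explicit Lefschetz power.

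A direct Euler-form bookkeeping then shows that the virtual motive of each extension stratum equals the product of the virtual motives of its three associated graded pieces times $(-\cL^\oh)^{E}$, where $E=\sum_{i<j}\hi(\wtl\al^i,\wtl\al^j)$ with $\wtl\al^2=(\al^2,1)$ and $\wtl\al^1,\wtl\al^3$ having $\infty$-coordinate zero. The symmetric part of this Euler contribution cancels against the $(-\cL^\oh)^{\hi(\cdot,\cdot)}$ normalizations already baked into $[\gM_\ze^+(J,\al^1)]_\vir$, $[\gM_\ze(\wtl J,\al^2)]_\vir$ and $[\gM_\ze^-(J,\al^3)]_\vir$, leaving exactly the skew-symmetric twist $(-\cL^\oh)^{\ang{\cdot,\cdot}}$ that defines multiplication in $\That_{\wtl Q}$. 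Summing over $(\al^1,\al^2,\al^3)$ and respecting the HN ordering yields the product $A_\ze^+\cdot\wtl A_\ze\cdot A_\ze^-$ in the specified non-commutative order.

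The main obstacle is the motivic Hall-algebra bookkeeping for the extension strata, but this is precisely the cut-based argument already carried out in~\cite{nagao_wall-crossing,mozgovoy_motivica} for Theorem~\ref{thm_FP}. The only new input here is the presence of the framed factor in the middle of the HN filtration, which forces the placement of $\wtl A_\ze$ strictly between $A_\ze^+$ and $A_\ze^-$; once that ordering is pinned down by the requirement $\dim\wtl V^2_\infty=1$, the identity follows from exactly the uniqueness of factorization invoked in Lemma~\ref{factorlemma}.
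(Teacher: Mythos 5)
Your proposal is correct and follows essentially the same route as the paper, which simply notes that the three-step filtration of Proposition~\ref{prop_filtration} (the HN filtration with respect to the perturbed charge $Z_{\ze,\eps}$, coarsened into phase~$>$, phase~$=$, phase~$<$ that of $\wtl\al$) induces the factorization exactly as in Theorem~\ref{thm_FP}, delegating the Hall-algebra and Euler-form bookkeeping to~\cite{nagao_wall-crossing,mozgovoy_motivica}. You spell out the same mechanism in a bit more detail and correctly identify the uniqueness clause of Lemma~\ref{factorlemma} and the $\dim\wtl V^2_\infty=1$ condition as what pins $\wtl A_\ze$ into the middle of the ordered product.
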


\begin{prop}
\label{wtl A}
\[
\wtl A_U =  A_U\cdot y_\infty.
\]
\end{prop}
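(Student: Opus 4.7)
The plan is to exploit the fact that the Klebanov--Witten potential $W$ involves only arrows of $Q$, not the single new framing arrow $\infty\to 0$. Consequently, for every dimension vector $\wtl\al=(\al,1)$ we have a product decomposition
\[R(\wtl Q,\wtl\al)=R(Q,\al)\xx V_0,\]
where $V_0=\cC^{\al_0}$ parametrises the framing map $\wtl V_\infty\to V_0$, and the function $f_{\wtl\al}$ is the pullback of $f_\al$ along the first projection. In particular $f_{\wtl\al}\inv(t)=f_\al\inv(t)\xx V_0$ for every $t\in\cC$.

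Any cut $I$ of $(Q,W)$ is automatically a cut of $(\wtl Q,W)$, so Theorem~\ref{thr:bbs} applies to $f_{\wtl\al}$ (with the $\cC^*$-action trivial on the $V_0$-factor). This gives
\[[f_{\wtl\al}\inv(0)]-[f_{\wtl\al}\inv(1)]=\bigl([f_\al\inv(0)]-[f_\al\inv(1)]\bigr)\cdot\cL^{\al_0},\]
and accounting for the dimension shift $\dim R(\wtl Q,\wtl\al)=\dim R(Q,\al)+\al_0$ when passing to virtual motives, a short calculation yields $[\crit(f_{\wtl\al})]_\vir=(-\cL^\oh)^{\al_0}\cdot[\crit(f_\al)]_\vir$. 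By the convention recorded in the remark just before the definition of $\wtl A_U$, both moduli stacks are quotients by the \emph{same} group $\GG_\al$, so dividing by $[\GG_\al]_\vir$ gives
\[[\gM(\wtl J,\al)]_\vir=(-\cL^\oh)^{\al_0}\cdot[\gM(J,\al)]_\vir.\]

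It remains to package this into an equality of generating series in $\That_{\wtl Q}$. The only arrow of $\wtl Q$ not already in $Q$ is the framing arrow $\infty\to 0$, and a direct computation of the skew form gives $\ang{(\al,0),(0,1)}_{\wtl Q}=\al_0$. Hence
\[y^{(\al,0)}\cdot y_\infty=(-\cL^\oh)^{\al_0}\cdot y^{(\al,1)},\]
consistent with the relation~\eqref{eq:y infty rel}. Multiplying $A_U$ on the right by $y_\infty$ then yields
\[A_U\cdot y_\infty=\sum_{\al\in\cN^{Q_0}}[\gM(J,\al)]_\vir\cdot y^{(\al,0)}\cdot y_\infty=\sum_{\al\in\cN^{Q_0}}(-\cL^\oh)^{\al_0}[\gM(J,\al)]_\vir\cdot y^{(\al,1)}=\wtl A_U,\]
which is the desired equality. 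The only delicate point is bookkeeping of the factor $(-\cL^\oh)^{\al_0}$, which arises separately from the virtual motive comparison and from the quantum-torus twist and is arranged to cancel; beyond this there is no genuine obstacle.
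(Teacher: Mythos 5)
Your proof is correct, but it takes a more hands-on route than the paper. The paper's own argument is a one-liner in the spirit of Theorem~\ref{thm_FP}: every $\wtl J$-module $\wtl V$ with $\dim\wtl V_\infty=1$ has a unique filtration $0\sb V\sb\wtl V$ with $\wtl V/V\simeq S_\infty$, and the motivic factorization machinery then yields $\wtl A_U=A_U\cdot y_\infty$ directly, with the twist in $\That_{\wtl Q}$ absorbing all numerical bookkeeping. You instead unwind exactly what that factorization amounts to in this simple case: the product decomposition $R(\wtl Q,\wtl\al)=R(Q,\al)\xx\cC^{\al_0}$ with $f_{\wtl\al}$ pulled back from $f_\al$, the resulting identity $[\gM(\wtl J,\al)]_\vir=(-\cL^\oh)^{\al_0}[\gM(J,\al)]_\vir$ (using that both stacks are quotients by the same group $\GG_\al$, as the paper's remark stipulates), and the matching twist $y^{(\al,0)}\cdot y_\infty=(-\cL^\oh)^{\al_0}y^{(\al,1)}$ from \eqref{eq:y infty rel}. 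What your version buys is explicitness and independence from the Hall-algebra-type factorization lemma: the cancellation of the $(-\cL^\oh)^{\al_0}$ between the virtual-motive normalization and the quantum-torus relation is exhibited rather than delegated; note also that since $[\gM(\wtl J,\al)]_\vir$ is defined by the analogue of \eqref{eq:bbs1}, you could even bypass Theorem~\ref{thr:bbs} and compare the fiber differences $[f^{-1}(0)]-[f^{-1}(1)]$ directly, the fibers simply acquiring a factor $\cL^{\al_0}$. What the paper's approach buys is brevity and uniformity: the same filtration mechanism underlies Proposition~\ref{prop_factorization1}, so the two factorizations are proved by one principle. The two arguments are really two descriptions of the same geometry, since the unique submodule $V\sb\wtl V$ corresponds to your projection $R(\wtl Q,\wtl\al)\to R(Q,\al)$ with affine fibers.
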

\begin{proof}
Any $\wtl{Q}$-module (resp.\ $\wtl J$-module) $\wtl V$ with $\dim\wtl V_\infty=1$ and $\udim V=\al$ has a unique filtration 
$$0\sb V\sb\wtl V$$
with 
$$\wtl V/V\simeq S_\infty,$$
the simple module concentrated at the vertex $\infty$.
Thus the factorization follows.
\end{proof}

\begin{proof}[Proof of Theorem \ref{thm_framed_vs_nonframed}]
We have
\begin{align} 
\wtl A_\ze
&=(A_\ze^+)^{-1}\cdot\wtl A_U\cdot(A_\ze^-)^{-1} 
&(\text{Proposition \ref{prop_factorization1}})\notag\\
&=(A_\ze^+)^{-1}\cdot(A_\ze^+\cdot A_\ze^-\cdot y_\infty)\cdot(A_\ze^-)^{-1}
&(\text{Prop.~\ref{wtl A} and Lemma \ref{lmm:decompose}})\notag\\
&=y_\infty \cdot \frac{A_\ze^-(\cL y_0,y_1,\dots)}{A_\ze^-(y_0,y_1,\dots)}.
& (\text{Equation \eqref{eq:y infty rel}})
\label{eq_2}
\end{align}
It follows from \eqref{eq:y infty rel} that $y_\infty\cdot Z_\ze(-\cL^\oh y_0,\dots)=\wtl A_\ze$.
Combining this with \eqref{eq_2} we get the statement of Theorem~\ref{thm_framed_vs_nonframed}.
\end{proof}

\subsection{Application to the conifold}
The following theorem is the main result of this section, announced as Theorem~\ref{announce_thm_main_res}. Let $(Q,W)$ be the conifold quiver with potential.

\begin{thm} For generic $\ze\in\R^2$, 
\begin{equation}
Z_\ze(y_0,y_1)= \prod_{\substack{\al\in\Delta_+ \\ \ze\cdot\al<0}} Z_{\al}(y_0, y_1),
\label{eq main res form}
\end{equation}
with 
\[
Z_\al(-y_0,y_1)\!=\!\begin{cases}
\displaystyle\prod_{j=0}^{\al_0-1}
\left(1-\cL^{-\frac{\al_0}2+\oh+j}y^\al\right)&\al\in\De^\re_+\\
\displaystyle\prod_{j=0}^{\al_0-1}
\left(1-\cL^{-\frac{\al_0}2+1+j}y^\al\right)\inv\left(1-\cL^{-\frac{\al_0}2+2+j}y^\al\right)\inv&\al\in\De^\im_+
\end{cases}
\]
\label{thm_main_res}
\end{thm}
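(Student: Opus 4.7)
The plan is to assemble Theorem~\ref{thm_main_res} from the pieces already set up, namely Theorem~\ref{thm main universal result} (the explicit product form of $A_U$), Lemma~\ref{lmm:decompose} (the factorization $A_U = A_\ze^+ A_\ze^-$ over positive roots), and Theorem~\ref{thm_framed_vs_nonframed} (the framed/unframed relation). The conifold quiver is symmetric, so $\That_Q$ is commutative and Theorem~\ref{thm_framed_vs_nonframed} applies, giving
\[
Z_\ze(y_0,y_1)=\frac{A_\ze^-(-\cL^{\oh}y_0,y_1)}{A_\ze^-(-\cL^{-\oh}y_0,y_1)}.
\]
The genericity hypothesis (not orthogonal to any root) is exactly the condition under which this decomposition into $\pm$ parts is well defined, and it matches the chamber structure.

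The next step is to substitute the product formula from Lemma~\ref{lmm:decompose},
\[
A_\ze^- = \prod_{\substack{\al\in\De_+\\ \ze\cdot\al<0}} A^\al(y_0,y_1),
\]
and commute the ratio with the product. Since each factor $A^\al$ depends only on the monomial $y^\al=y_0^{\al_0}y_1^{\al_1}$, it suffices to reduce the theorem to the root-by-root identity
\[
Z_\al(y_0,y_1)=\frac{A^\al(-\cL^{\oh}y_0,y_1)}{A^\al(-\cL^{-\oh}y_0,y_1)}
\]
for every $\al\in\De_+$. Under $y_0\mapsto -\cL^{\pm\oh}y_0$, the monomial $y^\al$ transforms to $(-1)^{\al_0}\cL^{\pm\al_0/2}y^\al$, so writing $z=(-1)^{\al_0}y^\al$ the ratio becomes a product of shifted Pochhammer-type expressions in $z$ and $\cL$.

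The third step is the telescoping computation. For a real root $\al\in\De^\re_+$, the factor $A^\al$ is $\prod_{j\ge 0}(1-\cL^{-j-\oh}y^\al)$, so the ratio is
\[
\prod_{j\ge 0}\frac{1-\cL^{-j-\oh+\al_0/2}z}{1-\cL^{-j-\oh-\al_0/2}z},
\]
and since the denominator factors are precisely the numerator factors with $j$ shifted by $\al_0$, the infinite product collapses to the finite product $\prod_{j=0}^{\al_0-1}(1-\cL^{-\al_0/2+\oh+j}z)$, matching $Z_\al(y_0,y_1)$ after absorbing the sign $(-1)^{\al_0}$ into the $-y_0$ on the left. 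For an imaginary root $\al=(n,n)\in\De^\im_+$, the same kind of telescoping applied to each of the two factors in $A^\al$ yields the two finite products in the definition of $Z_\al$; here one keeps track of four infinite products indexed by $j\ge 0$ and verifies that each denominator is the numerator with its indexing range extended by $n$ terms.

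The computation is essentially mechanical once the previous theorems are in place, so I do not expect a serious obstacle; the only spot requiring care is bookkeeping the signs and half-integer exponents so that the telescoped finite product lines up exactly with the definition of $Z_\al$ (in particular the convention $Z_\al(-y_0,y_1)$ on the left of the displayed formulas). As a sanity check one can specialize $\cL^{\oh}\to 1$ and compare against the numerical DT formulae of~\cite{nagao_counting}, and also verify the PT/DT chambers of Corollary~\ref{announce_thm_main_cor} by choosing $\ze$ so that $\ze\cdot\al<0$ precisely on the expected subset of $\De_+$.
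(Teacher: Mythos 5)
Your proposal is correct and follows essentially the same route as the paper: the paper's proof likewise substitutes the decomposition of Lemma~\ref{lmm:decompose} into Theorem~\ref{thm_framed_vs_nonframed} to get $Z_\al(y_0,y_1)=A^\al(-\cL^{\oh}y_0,y_1)/A^\al(-\cL^{-\oh}y_0,y_1)$ and then evaluates using Theorem~\ref{thm main universal result}. Your explicit telescoping of the infinite products (with the sign bookkeeping via $z=(-1)^{\al_0}y^\al$) is exactly the computation the paper leaves implicit, and it is carried out correctly.
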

\begin{proof} Substituting the result of Lemma \ref{lmm:decompose} into Theorem~\ref{thm_framed_vs_nonframed},
we get the product form~\eqref{eq main res form}, with \[Z_\al(y_0,y_1)=A^\al(-\cL^{\oh}y_0,y_1)/A^\al(-\cL^{-\oh}y_0,y_1).\]
Now use the expression for $A^\al$ from Theorem~\ref{thm main universal result}.
\end{proof}

\section{DT/PT series}
\subsection{Chambers and the moduli spaces for the conifold}

Let $(Q,W)$ be the conifold quiver with potential.
In the space $\R^2$ of stability parameters, consider the lines
\begin{align*}
L_+(m)&=\{(\ze_0,\ze_1)\mid m\,\ze_0+(m-1)\ze_1=0\}\quad (m\geq 1),\\
L_\infty&= \{(\ze_0,\ze_1)\mid \ze_0+\ze_1=0\},\\
L_-(m)&= \{(\ze_0,\ze_1)\mid m\,\ze_0+(m+1)\ze_1=0\}\quad (m\geq 0).
\end{align*}
It is immediately seen that these are exactly the lines orthogonal to the roots in $\Delta_+$ 
with respect to the standard inner product. 
Let $L\subset\R^2$ denote the union of this countable set of lines. The complement of $L$ 
in $\R^2$ is a countable union of open cones.
Denote by $Y^+$ the flop of $Y$ along the embedded rational curve. 

\begin{thm} {\rm\cite[Lemma 3.1 and Propositions 2.10-2.13]{nagao_counting}} \label{thm_desrc}
The set of generic parameters in $\cR^2$ is the complement of the union $L$ of the lines defined above. 
\begin{enumerate}\renewcommand{\theenumi}{\roman{enumi}}
\item For $\ze$ with $\ze_0<0$ and $\ze_1<0$, the moduli spaces $\MM_\ze(\Jtilde,\al)$ are the NCDT moduli spaces, the moduli spaces of cyclic $J$-modules from~\cite{szendroi_non-commutative}.
\item For $\ze$ near the line $L_\infty$ with $\ze_0<\ze_1$ and $\ze_0+\ze_1<0$, the moduli spaces $\MM_\ze(\Jtilde,\al)$ are the commutative DT moduli spaces of $Y$ from~\cite{maulik_gromov-witten}, 
the moduli spaces of subschemes on $Y$ with support in dimension at most 1.
\item For $\ze$ near the line $L_\infty$ with $\ze_0<\ze_1$ and $\ze_0+\ze_1>0$, the moduli 
spaces $\MM_\ze(\Jtilde, \al)$ are the PT moduli spaces of $Y$ introduced in~\cite{pandharipande_curve}; 
these are moduli spaces of stable rank-1 coherent systems. 
\item For $\ze$ near the line $L_\infty$ with $\ze_0>\ze_1$ and $\ze_0+\ze_1<0$, the moduli spaces $\MM_\ze(\Jtilde, \al)$ are the commutative DT moduli spaces of the flop $Y^+$. 
\item For $\ze$ near the line $L_\infty$ with $\ze_0<\ze_1$ and $\ze_0+\ze_1>0$, the moduli spaces $\MM_\ze(\Jtilde, \al)$ are the PT moduli spaces of the flop $Y^+$. 
\item For $\ze$ with $\ze_0>0$ and $\ze_1>0$, the moduli space $\MM_\ze(\Jtilde, \al)$ consists of a point for $\al=0$ and is otherwise empty.
\end{enumerate}
\end{thm}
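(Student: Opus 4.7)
The plan is to first pin down the locus of non-generic parameters, and then identify each of the six chambers with a known moduli problem. The key input for the first part is the classification of stable $J$-modules: the dimension vectors of simple $J$-modules (equivalently, $0$-stable modules) form precisely the set $\Delta_+$ of positive roots, with the real roots $(n,n-1)$ and $(n-1,n)$ arising from the rigid stable modules supported on the exceptional curve of $Y$ (obtained by iterated reflections from the two simples $(1,0)$ and $(0,1)$) and the imaginary roots $(n,n)$ from the $\mathcal{P}^1\times Y$-families of simples supported off the zero section. Since a parameter $\ze$ fails to be generic precisely when $\ze\cdot\al=0$ for some $\al\in\Delta_+$, and the lines orthogonal to these vectors are exactly $L_+(m)$, $L_-(m)$ and $L_\infty$, the description of the generic locus follows.

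For the chamber identifications I would first dispose of the extreme cases by direct inspection of the definition. In case (vi), for any $\wtl V$ with $\udim V\neq 0$ the submodule $V\subset\wtl V$ has $\ze\cdot\udim V>0$, violating the semistability inequality, so only $\al=0$ survives. In case (i), the inequality $\ze\cdot\udim V'\le 0$ is automatic, so only the quotient condition is active; it translates to the statement that $\wtl V$ has no nonzero quotient killing $\wtl V_\infty$, i.e.\ that $\wtl V$ is generated by the image of $s\colon\wtl V_\infty\to V_0$. This is exactly the cyclicity condition defining the NCDT moduli of \cite{szendroi_non-commutative}.

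Chambers (ii)--(v) require a derived-equivalence input: $J$ is Morita equivalent to $\End_Y(\mathcal{O}_Y\oplus\mathcal{O}_Y(-1))^{\mathrm{op}}$, so the bounded derived category $D^b(\operatorname{mod}\Jtilde)$ is equivalent to a framed version of $D^b(\mathrm{Coh}_{\le 1}Y)$, with an analogous equivalence for $Y^+$ via its own small resolution. Under this equivalence the heart of $\Jtilde$-modules is a tilt of the heart of framed sheaves, and as $\ze$ moves inside the half-plane $\ze_0+\ze_1<0$ toward $L_\infty$ the central charge rotates so that $\ze$-stability degenerates to Gieseker stability on $Y$ or on $Y^+$, depending on which side of the diagonal $\ze_0=\ze_1$ one approaches from; this yields (ii) and (iv). Crossing $L_\infty$ in the appropriate sector corresponds to the standard DT/PT wall-crossing in the derived category, and the resulting stability is the limit stability of Bayer which on $Y$ (resp.\ $Y^+$) coincides with Pandharipande--Thomas stability, giving (iii) and (v).

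The main obstacle is precisely the last step: tracking the stability condition across both the tilting equivalence and the wall $L_\infty$. Concretely one needs to (a) write down the tilting equivalence and identify the image of the natural framed heart, (b) verify that the $\ze$-slope comes from a polynomial Bridgeland central charge converging to the Gieseker central charge as $\ze\to L_\infty$ from the correct side, and (c) identify the limit stability on the PT side with the coherent-systems stability of \cite{pandharipande_curve}. All three ingredients are established in \cite{nagao_counting}, and I would follow that exposition rather than reconstruct the derived-category setup from scratch.
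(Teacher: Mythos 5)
This theorem is quoted by the paper directly from~\cite[Lemma 3.1 and Propositions 2.10--2.13]{nagao_counting} and is not re-proved there, so there is no internal proof to compare your argument against; the relevant check is whether your sketch is consistent with that reference, and on the whole it is.

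Your handling of the extreme chambers is correct and elementary: in case (vi) the submodule $V\subset\wtl V$ with $\wtl V'_\infty=0$ forces $\al=0$ once both $\ze_i>0$, and in case (i) the submodule inequality is vacuous while the quotient inequality forces the cokernel of the subrepresentation generated by $s(\wtl V_\infty)$ to vanish, which is precisely the cyclicity condition of~\cite{szendroi_non-commutative}. Your identification of the non-generic locus with the union of lines orthogonal to $\De_+$ is also the right mechanism, though one should be a little careful with the logical order: the wall structure in~\cite{nagao_counting} is established by classifying the dimension vectors of stable $J$-modules (the roots), and the ``generic'' notion used in this paper is then \emph{defined} so as to make the walls exactly the lines you list; it is worth being explicit that these two descriptions agree.

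For chambers (ii)--(v) your outline is directionally right but imprecise in ways that matter. The equivalence is a \emph{derived} (tilting) equivalence $D^b(\mathrm{mod}\,J)\simeq D^b(\mathrm{Coh}\,Y)$ coming from the NCCR, not a Morita equivalence of abelian categories, and the identification of moduli spaces in~\cite{nagao_counting} is run through Bridgeland's perverse coherent sheaves $\mathrm{Per}(Y/X)$ and $\mathrm{Per}(Y^+/X)$ rather than a Bayer-type polynomial/limit stability degeneration. The ``near $L_\infty$'' qualifier is also load-bearing in a way you should flag: as the paper's own Remark records, how near depends on the dimension vector $(\al,1)$, because one must cross finitely many of the walls $L_\pm(m)$ to get from the NCDT chamber to the DT/PT ones, and the needed $m$ grows with $\al$. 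Finally note that the paper's item (v) appears to contain a typo (the inequalities duplicate those of (iii); it should read $\ze_0>\ze_1$ and $\ze_0+\ze_1>0$), and your phrasing ``which side of the diagonal'' correctly reflects the intended content. With those caveats, deferring the detailed heart-matching to~\cite{nagao_counting}, as you do, is the appropriate move.
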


\begin{rk} Note that ``near'' in the above statements means sufficiently near depending on the 
dimension vector $(\al,1)$. 
\end{rk} 

\subsection{Motivic PT and DT invariants}

\begin{prop}
The refined partition functions of the resolved conifold $Y$ for the DT and PT chambers are given by
\begin{equation}
Z_{\rm PT}(-y_0, y_1) = \prod_{m\geq 1}\prod_{j=0}^{m-1} \left(1-\cL^{-\frac m2+\oh+j}y_0^m y_1^{m-1}\right)
\label{PTseries}
\end{equation}
and
\begin{multline}
Z_{\rm DT}(-y_0, y_1) = Z_{\rm PT}(-y_0, y_1)\\
\cdot\prod_{m\geq 1}\prod_{j=0}^{m-1}
\left(1-\cL^{-\frac m2+1+j}y_0^m y_1^m\right)^{-1} 
\left(1-\cL^{-\frac m2+2+j}y_0^m y_1^m\right)^{-1}.
\label{DTseries}
\end{multline}
\end{prop}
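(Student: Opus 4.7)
The plan is to apply the product formula of Theorem~\ref{thm_main_res} to the specific stability chambers identified in Theorem~\ref{thm_desrc}. The substantive content reduces to enumerating, for a $\ze$ in each of the PT and DT chambers, the set of positive roots $\al \in \Delta_+$ satisfying $\ze \cdot \al < 0$, and then reading off the corresponding product of factors $Z_\al(y_0,y_1)$.

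First I would fix the PT chamber, namely a generic $\ze = (\ze_0,\ze_1)$ near $L_\infty$ with $\ze_0 < \ze_1$ and $\ze_0 + \ze_1 > 0$, so that $\ze_0 < 0 < \ze_1$ and $|\ze_0 + \ze_1|$ is small compared to $|\ze_0|$. One checks that
\[
\ze \cdot (i, i-1) = (i-1)(\ze_0 + \ze_1) + \ze_0, \qquad
\ze \cdot (i-1, i) = (i-1)(\ze_0 + \ze_1) + \ze_1, \qquad
\ze \cdot (i,i) = i(\ze_0 + \ze_1),
\]
so that (for $\ze$ sufficiently close to $L_\infty$ relative to the dimension vectors in play) the condition $\ze\cdot\al<0$ selects precisely the real roots $\al = (i,i-1)$ with $i \geq 1$. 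Substituting the explicit factor for real roots from Theorem~\ref{thm_main_res} yields formula~\eqref{PTseries}.

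Next I would move $\ze$ across the wall $L_\infty$ into the DT chamber, i.e.\ generic $\ze$ with $\ze_0 < \ze_1$ and $\ze_0 + \ze_1 < 0$ but close to $L_\infty$. Exactly the same computation of $\ze\cdot\al$ shows that the real roots $(i,i-1)$ still contribute (their sign is controlled by $\ze_0$, not by the small quantity $\ze_0+\ze_1$), that the real roots $(i-1,i)$ still do not, and that now every imaginary root $(i,i)$ contributes since $i(\ze_0+\ze_1) < 0$. Multiplying the PT product by the imaginary root factors from Theorem~\ref{thm_main_res} gives formula~\eqref{DTseries}.

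The only mild subtlety — and thus the one point to be careful about — is the meaning of ``near $L_\infty$'' as flagged in the remark after Theorem~\ref{thm_desrc}: a fixed $\ze$ in the chamber can have $\ze\cdot (i,i-1) > 0$ for very large $i$, so strictly speaking one must allow $\ze$ to depend on the dimension vector $\al$ when defining the PT/DT generating series. However, since the moduli spaces and the corresponding $\ze$-stability conditions are locally constant on the chambers and only finitely many roots are relevant for each coefficient of the series, one can pick, for each finite truncation of the generating series, a single representative $\ze$ for which the above sign analysis is valid. This consistency is already implicit in Theorem~\ref{thm_desrc}; no further work beyond invoking it is required.
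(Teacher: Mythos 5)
Your proposal is correct and takes essentially the same route as the paper: the paper also deduces the Proposition by feeding the chamber identifications of Theorem~\ref{thm_desrc} into the product formula of Theorem~\ref{thm_main_res}, simply fixing the concrete representative $\ze=(-1+\eps,1)$, $0<\eps\ll1$, for the PT chamber (and its analogue across $L_\infty$ for DT) and reading off $\{\al\in\De_+:\ze\cdot\al<0\}=\{(m,m-1):m\ge1\}$, resp.\ this set together with the imaginary roots. Your explicit handling of the ``near $L_\infty$'' truncation subtlety is left implicit in the paper's proof, but the argument is otherwise the same.
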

\begin{proof}
Let $\ze=(-1+\eps,1)$, $0<\eps\ll1$, be some stability corresponding to PT moduli spaces. Then
$$\sets{\al\in\De_+}{\ze\cdot\al<0}=\sets{(m,m-1)}{m\ge1}.$$
Applying Theorem~\ref{thm_main_res} we obtain
$$Z_{PT}(-y_0,y_1)=\prod_{\ze\cdot\al<0}Z_\al(-y_0,y_1)
=\prod_{m\geq 1}\prod_{j=0}^{m-1} \left(1-\cL^{-\frac m2+\oh+j}y_0^m y_1^{m-1}\right).$$
The proof of the second formula is similar.
\end{proof}

Let us re-write these formulae in the perhaps more familiar large radius parameters $T=y_1\inv$, $s=y_0y_1$, 
corresponding to the cohomology class of a point and a curve on the geometry $Y$. We obtain
\begin{equation}
Z_{\rm PT}(-s, T) = \prod_{m\geq 1}\prod_{j=0}^{m-1} \left(1- \cL^{-\frac m2+\oh+j} s^m T \right)
\label{PTseries_largerad}
\end{equation}
and
\begin{equation}
Z_{\rm DT}(-s, T) = Z_{\rm PT}(-s, T)\cdot\prod_{m\geq 1}\prod_{j=0}^{m-1}
\left(1-\cL^{-\frac m2+1+j}s^m\right)^{-1} 
\left(1-\cL^{-\frac m2+2+j}s^m\right)^{-1}.
\label{DTseries_largerad}
\end{equation}
The specializations at $\LL^{\oh}=1$ are the PT and DT series of the resolved conifold respectively, given
by the standard expressions
\[ \ub Z_{\rm PT}(-s, T) = \prod_{m\geq 1} \left(1-Ts^m\right)^m=\Exp\left(\frac{-T}{(s^\oh-s^{-\oh})^2}\right)
\]
and
\[\ub Z_{\rm DT}(-s, T) =  M(s)^2 \prod_{m\geq 1} \left(1-Ts^m\right)^m,\]
with $M(s)=\prod_{m\geq 1}(1-s^m)^{-m}$ the MacMahon function, and $\ub{\phantom 1}$ denoting generating series 
of numerical (as opposed to motivic) invariants.

Wall crossing at the special wall~$L_\infty$ is the PT/DT wall crossing of~\cite{pandharipande_curve}. 
On the PT side, the coefficient of the $T^0$ term is just $1$, since if there is no curve present, the only 
possible PT pair consists of the structure sheaf of $Y$ (with zero map). On the DT side,
the moduli space with zero curve class is the moduli space of ideal sheaves of point clusters on $Y$, in other
words the Hilbert scheme of points of $Y$. Hence the ratio of the $T^0$ terms gives the generating function of 
virtual motives of the Hilbert scheme of points of $Y$~\cite{behrend_motivic}:
\[\sum_{n=0}^\infty [Y^{[n]}]_\vir (-s)^n = \prod_{m\geq 1} \prod_{j=0}^{m-1} \left(1-\LL^{1+j-\frac{m}{2}}s^m\right)^{-1}\left(1-\LL^{2+j-\frac{m}{2}}s^m\right)^{-1}.\]
At $\LL^{\oh}=1$, we obtain the MNOP result $M(s)^2$. Note in particular that, as proved in~\cite{behrend_motivic}
but contrary to the speculations of~\cite{dimofte_refined}, the motivic refinement is not a square, 
though both products are combinatorial refinements of the usual MacMahon series.

\begin{rk} Note that our results in fact imply a full factorization
\begin{equation}\label{eq:refinedPTDT}
 Z_{\rm DT}(s, T) = \left(\sum_{n=0}^\infty [Y^{[n]}]_\vir s^n \right)Z_{\rm PT}(s, T), 
\end{equation}
with the middle sum being a product of refined MacMahon series as in~\cite{behrend_motivic}. This is a motivic
analogue of the factorization
\begin{equation}\label{eq:PTDT}  \ub Z_{X, {\rm DT}}(s, T) = M(s)^{e(X)}\ub Z_{X, {\rm PT}}(s, T)
\end{equation}
conjectured for a quasi-projective Calabi--Yau threefold X in~\cite{maulik_gromov-witten, pandharipande_curve}, 
proved in~\cite{bridgeland_hall} following earlier proofs of a version of this statement in~\cite{stoppa_hilbert, toda_curve}.
In general, the only definition we have of the motivic $Z_{X, {\rm DT}}$ and $Z_{X, {\rm PT}}$ is through the 
partially conjectural setup of~\cite{kontsevich_stability}. Assuming that relevant parts of~\cite{kontsevich_stability} 
are put on a firm footing, including the integration ring homomorphism from the motivic
Hall algebra to the motivic quantum torus, it seems likely that the proof of~\cite{bridgeland_hall} can be adapted to prove the motivic 
version~\eqref{eq:refinedPTDT} in general.
\end{rk}

\subsection{Connection with the refined topological vertex}\label{sec:ref-top-vx}
The standard way to compute the unrefined PT series of the resolved conifold $Y$ is via
the topological vertex~\cite{aganagic_topological}. From the toric combinatorics, we obtain the formula
\[ \ub Z^{\rm vertex}_{\rm PT}(-s, T) = \sum_{\lambda} \ub C_{\lambda\emptyset\emptyset}(s) \ub C_{\lambda^t\emptyset\emptyset}(s) (-T)^{|\lambda|},
\]
see e.g.~\cite[(63)]{iqbal_refined}.
On the right hand side, the sum runs over all partitions; for a partition $\lambda$, 
$\lambda^t$ denotes the conjugate partition, and $\ub C_{\lambda\mu\nu}(s)$ is the
topological vertex expression of~\cite{aganagic_topological}. In the case when $\mu=\nu=\emptyset$, 
$\ub C_{\lambda\emptyset\emptyset}(s)$ can be expressed as a simple Schur function, and then Cauchy's identity
immediately gives
\[ \ub Z^{\rm vertex}_{\rm PT}(-s, T) = \prod_{m\geq 1} \left(1-Ts^m\right)^m = \ub Z_{\rm PT}(-s, T).\]
In mathematical terms~\cite{maulik_gromov-witten}, this equality (or rather its DT version) 
expresses torus localization, the combined expression $M(s)\ub C_{\lambda\mu\nu}(s)$ being the generating function 
of 3-dimensional partitions with given 2-dimensional asymptotics along the coordinate axes. 

The refined PT partition function as computed by the refined topological vertex is~\cite[(67)]{iqbal_refined}
\[
Z^{\rm vertex}_{\rm PT}(t,q,T) =  \sum_{\lambda} C_{\lambda\emptyset\emptyset}(q,t) C_{\lambda^t\emptyset\emptyset}(t,q) (-T)^{|\lambda|},
\]
where $C_{\lambda\mu\nu}(q,t)$ is now the refined topological vertex expression.
Using the Cauchy identity again, this sum reduces to~\cite[(67)]{iqbal_refined}
\begin{align}
Z^{\rm vertex}_{\rm PT}(t,q,T)
&=\prod_{i,j\geq 1}\left(1-T q^{i-\frac{1}{2}}t^{j-\frac{1}{2}}\right) \label{vertex_ref_PT} \\
&=\exp\bigg(\sum_{n\ge1}\frac{-T^n}{n(q^{\frac n2}-q^{-\frac n2})(t^{\frac n2}-t^{-\frac n2})}\bigg)\nonumber\\
&=\Exp\left(\frac{-T}{(q^\oh-q^{-\oh})(t^\oh-t^{-\oh})}\right)\nonumber.
\end{align}

\begin{prop}
We have
$$Z^{\rm vertex}_{\rm PT}(t,q,T)=Z_{PT}(-s,T),$$
when we make the change of variables $q=\cL^\oh s$, $t=\cL^{-\oh} s$, with $(qt)^\oh=s$.
\end{prop}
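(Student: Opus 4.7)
The proof is a direct computation: substitute the change of variables into the vertex formula~\eqref{vertex_ref_PT} and reindex the product to match the expression for $Z_{\rm PT}(-s,T)$ from~\eqref{PTseries_largerad}.

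First I would compute, under $q=\cL^\oh s$, $t=\cL^{-\oh}s$, the exponent
\[
q^{i-\oh}t^{j-\oh}=\cL^{(i-j)/2}s^{i+j-1},
\]
so that
\[
Z^{\rm vertex}_{\rm PT}(t,q,T)=\prod_{i,j\geq 1}\left(1-T\cL^{(i-j)/2}s^{i+j-1}\right).
\]

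Next, I would reindex the double product by grouping factors with a fixed value of $m=i+j-1\geq 1$. For each such $m$, $i$ ranges over $1,\dots,m$ (with $j=m+1-i$), and setting $k=i-1\in\{0,\dots,m-1\}$ gives $(i-j)/2=k+\tfrac12-\tfrac m2$. This yields
\[
\prod_{i,j\geq 1}\left(1-T\cL^{(i-j)/2}s^{i+j-1}\right)
=\prod_{m\geq 1}\prod_{k=0}^{m-1}\left(1-\cL^{-\frac m2+\oh+k}s^mT\right),
\]
which is exactly $Z_{\rm PT}(-s,T)$ by~\eqref{PTseries_largerad}.

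There is no real obstacle: once the substitution is written out, the two double products match term by term under the elementary reindexing $(i,j)\mapsto(m,k)$ with $m=i+j-1$, $k=i-1$. The only thing worth double-checking is that the formula~\eqref{vertex_ref_PT} really arises from the refined vertex under the same conventions we are using (i.e., that $(qt)^\oh=s$ is the correct identification between the refined topological vertex variables and the geometric variable $s$ representing the point class), which is already stated in the paper's setup.
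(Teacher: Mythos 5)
Your proof is correct and takes the same approach as the paper, which simply remarks that the identity is immediate on comparing \eqref{PTseries_largerad} with \eqref{vertex_ref_PT}; you have supplied the straightforward reindexing $(i,j)\mapsto(m,k)$ with $m=i+j-1$, $k=i-1$ that makes the comparison explicit.
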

\begin{proof} This is immediate when we compare~\eqref{PTseries_largerad} with~\eqref{vertex_ref_PT}.
\end{proof}

Thus we obtain a proof of the ``motivic=refined'' correspondence~\cite{dimofte_refined} in this example.
Note however that the situation is very different from the unrefined story: here we are not giving
a full mathematical interpretation of the refined topological vertex expression
$C_{\lambda\mu\nu}(q,t)$; indeed, it remains a very interesting problem to find one. We are only checking 
that the results agree in the particular case of the resolved conifold.

\subsection{Connection to the cohomological Hall algebra}
An alternative to considering the refined motivic invariants is to consider the mixed Hodge modules of 
vanishing cycles~\cite{dimca_milnor, kontsevich_cohomological}. The description as the vanishing locus 
of the trace of the potential endows the moduli spaces 
${\MM}_\ze\bigl(\Jtilde,\wtl\al\bigr)$ with the mixed Hodge modules
of vanishing cycles of the trace function. Recently, the cohomologies of the moduli spaces with
coefficients in these mixed Hodge modules have been organized into an algebra in~\cite{kontsevich_cohomological}, 
the (critical) cohomological Hall algebra. Replacing $\LL$ by $q$ in all our formulae, we obtain generating 
series of E-polynomials of these mixed Hodge modules, the analogues of the formulae of~\cite{dimca_milnor} in our 
situation.

\section*{Acknowledgements}
The authors thank Tom Bridgeland, Jim Bryan, and Tamas Hausel for helpful discussions. 
K.N.\ and B.Sz.\ wish to thank the Isaac Newton Institute, Cambridge for its hospitality, where
parts of this paper were written.
K.N.\ is supported by the Grant-in-Aid for Research Activity Start-up (No.\ 22840023) and for 
Scientific Research (S) (No.\ 22224001). S.M.\ is supported by EPSRC grant EP/G027110/1.

\providecommand{\bysame}{\leavevmode\hbox to3em{\hrulefill}\thinspace}
\providecommand{\href}[2]{#2}

\enlargethispage{5\baselineskip}
 
\end{document}